\documentclass[10pt,reqno]{article}

\textwidth 16cm\textheight 23cm
\oddsidemargin 0cm \evensidemargin 0cm
\headsep10mm \topmargin-9mm

\usepackage{amsmath}
\usepackage{latexsym}
\usepackage{amsmath,amsthm}
\usepackage{amsfonts}
\usepackage{amssymb}
\usepackage{amsfonts,euscript}
\usepackage{graphicx}

\DeclareGraphicsExtensions{.eps,epsfig,.bmp,.jpg,.pdf,.mps,.png,.gif}
\newtheorem{theorem}{\bf Theorem}[section]

\newtheorem{corollary}{\bf Corollary}[section]
\newtheorem{lemma}{\bf Lemma}[section]
\newtheorem{remark}{\bf Remark}[section]

\newcommand{\beq}{\begin{equation}}
\newcommand{\eeq}{\end{equation}}
\newcommand{\beqn}{\begin{eqnarray}}
\newcommand{\eeqn}{\end{eqnarray}}
\newcommand{\bear}{\begin{array}}
\newcommand{\eear}{\end{array}}
\newcommand{\beit}{\begin{itemize}}
\newcommand{\eeit}{\end{itemize}}
\newcommand{\beqno}{\begin{eqnarray*}}
\newcommand{\eeqno}{\end{eqnarray*}}

\allowdisplaybreaks

\title{Global solutions for  random vorticity equations perturbed by gradient dependent noise, in two and three dimensions}
\date{}

\numberwithin{equation}{section}

\begin{document}
\maketitle

\centerline{\scshape Ionu\c t Munteanu}
\medskip
{\footnotesize
  \centerline{Alexandru Ioan Cuza University of Ia\c si, Department of Mathematics }
   \centerline{Blvd. Carol I, no.11, 700506-Ia\c si, Romania}

   }
   {\footnotesize
  \centerline{Octav Mayer Institute of Mathematics, Romanian Academy }
   \centerline{Blvd. Carol I, no.8, 700505-Ia\c si, Romania}
\centerline{e-mail: ionut.munteanu@uaic.ro}
   }
\medskip

\centerline{\scshape Michael R$\ddot{o}$ckner}
\medskip
{\footnotesize
 \centerline{Fakultat fur Mathematik, Universitat Bielefeld  
}
     \centerline{D-33501 Bielefeld, Germany}
\centerline{e-mail: roeckner@math.uni-bielefeld.de}} 
\medskip
\noindent\begin{abstract}The aim of this work is to prove an existence and uniqueness result  of Kato-Fujita type  for  the Navier-Stokes equations, in vorticity form, in $2-D$ and $3-D$, perturbed by a gradient type multiplicative Gaussian noise (for sufficiently small initial vorticity). These equations are considered in order to model hydrodynamic turbulence. The approach was motivated by a recent result by V. Barbu and the second named author in \cite{b1}, that treats the stochastic $3D$-Navier-Stokes equations, in vorticity form, perturbed by linear multiplicative Gaussian noise. More precisely,  the equation is transformed to a random nonlinear parabolic equation, as in \cite{b1}, but the transformation is different and adapted to our gradient type noise. Then global unique existence results are proved for the transformed equation, while for the original stochastic Navier-Stokes equations,   existence of a solution adapted to the Brownian filtration is obtained  up to some stopping time.
\end{abstract}
\noindent \textbf{Keywords:} stochastic Navier-Stokes equation, turbulence, vorticity, Biot-Savart operator, gradient-type noise.

\noindent\textbf{MSC:}60H15, 35Q30, 76F20, 76N10.
\section{Introduction}
One of the main important features concerning the Navier-Stokes equation is its relation to the phenomenon of hydrodynamic turbulence, that  is often assumed to be caused by random background movements. That is why a randomly forced Navier-Stokes equation may be considered to model  this. In this direction, we recall the pioneering work of Bensoussan and Temam \cite{t} concerning the analytical study of a Navier-Stokes equation driven by a white noise type random force; followed later by numerous developments and extensions by many authors (see \cite{brez2, fl, mik, brez3} and the references therein).  We emphasize the approach in \cite{mik,mik2} that involves gradient dependent noise in order to model turbulence. In this light, we consider the following Navier-Stokes equation in dimension $d=2,3,$ perturbed by gradient dependent noise
\begin{equation}\label{e1}\left\{ \begin{array}{l}\displaystyle dX-\Delta Xdt+(X\cdot\nabla)Xdt=\sum_{i=1}^N A_i(X) d\beta_i(t)+\nabla\pi dt  \text{ on } (0,\infty)\times \mathbb{R}^d,\\
\nabla\cdot X =0 \text{ on } (0,\infty)\times \mathbb{R}^d,\\
X(0)=x \text{ in }\left(L^p(\mathbb{R}^d\right)^d,\end{array} \right.\ \end{equation}where $ x:\Omega\rightarrow \mathbb{R}^d$ is a random variable; $\pi$ denotes the pressure; $\left\{\beta_i\right\}_{i=1}^N$ is a system of independent Brownian motions on a probability space $\left(\Omega, \mathcal{F},\mathbb{P}\right)$ with normal filtration $(\mathcal{F}_t)_{t\geq0}$, $x$ is $\mathcal{F}_0-$adapted, and $A_i$ are certain operators, linear in the gradient of the solution,  specified below. 

Our aim in this paper is to study (\ref{e1}) by writing it in vorticity form (i.e., apply  the $curl$ operator to it) and by  transforming it into the following random partial differential equation
\begin{equation}\label{ho10}\begin{aligned}\frac{d y}{dt}=&\Delta y(t)+\Gamma^{-1}(t)[K(\Gamma(t)y(t))\cdot \nabla](\Gamma(t)y(t)),\ t>0;\ y(0)=U_0=curl \ x.\end{aligned}\end{equation}
where $\Gamma(t)$ solves \eqref{edi1} below and $K$ is the Biot-Savart operator. We analyze \eqref{ho10} with $\left\{\beta_i(\omega)\right\}_{i=1}^N$ for a.e. fixed $\omega$. In particular, we are going to prove a Kato-Fujita type result (see \cite{kato}), i.e., we prove that for small enough initial condition there exists a globally in time unique solution of (\ref{ho10}). The smallness of the initial conditions depends however on $\omega$ (see (\ref{e1100}) below). To this end, naturally we need some particular assumptions on the noise coefficients. However, there is an overlap with the assumptions in \cite{mik,mik2}. But, there are cases where our assumptions hold, and those in \cite{mik,mik2} do not hold, and vice versa.

Our approach through a corresponding random partial differential equation has the advantage that we can do a "path by path" analysis and, thus, obtain a better understanding of the dependence of the solution on the Brownian path, since it is obtained by a fixed point argument, i.e., by iteration.

Now let us state the assumptions precisely. In the two-dimensional case, that is $d=2$, we take $A_i$ of the form

\begin{equation}\label{e2}A_iX:=\left(\begin{array}{c}a_{1i}\partial_1X_1+a_{2i}\partial_2X_1+a_{3i}\partial_1X_2+a_{4i}\partial_2X_2\\
a_{5i}\partial_1X_1+a_{6i}\partial_2X_1+a_{7i}\partial_1X_2+a_{8i}\partial_2X_2\end{array}\right),\ i=1,2,...,N.\end{equation} The coefficients  $a_{ji}, \ j=1,2,...,8,\ i=1,..,N,$  are given in the precise form below:
\begin{equation}\label{ho4}\begin{aligned}& a_{1i}=\sigma_i,\ a_{2i}=\sigma_i,\ a_{3i}=\mu_i\xi_1-\theta_i\xi_2,\ a_{4i}=\theta_i\xi_1+\mu_i\xi_2,\\&
 a_{5i}=-\mu_i\xi_1+\theta_i\xi_2,\ a_{6i}=-\theta_i\xi_1-\mu_i\xi_2,\ a_{7i}=\sigma_i,\ a_{8i}=\sigma_i.\end{aligned}\end{equation}Here, $(\xi_1,\xi_2)\in \mathbb{R}^2$ is the space variable.  $\sigma_{i}:\mathbb{R}_+\times \Omega \rightarrow\mathbb{R},\ \sigma_i=\sigma_i(t,\omega),\ i=1,2,...,N,$ are continuous functions, that are $\mathcal{F}_t-$addapted, with $\int_0^\infty \sigma^2_i(s)ds<\infty$, for each $\omega$. $\mu_i=\mu_i(t,\omega),\ i=1,...,N$ are random functions. Finally, $\theta_i,\ i=1,2,...,N$ are positive constants.

For the three-dimensional case, i.e., $d=3$, we take $A_i$ of the form

\begin{equation}\label{e700}A_iX:= \sigma_i\mathbf{1}_3\cdot \nabla X+\theta_iX,\ i=1,2,...,N,\end{equation}where $\mathbf{1}_3$ is the vector in $\mathbf{R}^3$ with all its elements equal to one, and $\theta_i$ and $\sigma_i$ are as above.
\begin{remark}\label{remarca}We notice that, in $2-D$, for $N=2$,  the special case: $$a_{1i}\equiv a_{2i}\equiv a_{7i}\equiv a_{8i}\equiv \sigma_i,\ i=1,2,$$
$$a_{4i}=a_{5i}=0,\ i=1,2,$$and
$$-\partial_1a_{6i}=-\partial_2 a_{3i}=\theta_i>0,$$ corresponds to the  model for turbulence considered in \cite{mik}. The same holds for the 3-D case we consider here.
\end{remark}

In this work, we let $L^p(\mathbb{R}^d),\ 0<p<\infty$ denote  the space  of power $p-$Lebesgue integrable functions with the norm $|\cdot|_p$; by $W^{l,p}(\mathbb{R}^d)$ the corresponding Sobolev space; $H^1(\mathbb{R}^d)=W^{1,2}(\mathbb{R}^d)$; and by $C_b([0,\infty);L^p(\mathbb{R}^d)) $ the space of all bounded and continuous  $L^p-$valued functions, defined on $[0,\infty)$, with the sup norm. Sometimes we will omit to express the dependence on the space $\mathbb{R}^d$, in the notations, if this will not create any confusion.  We also set $\partial_i=\frac{\partial}{\partial \xi_i},\ i=1,...,d$; and $\mathbf{1}_d$ the vector in $\mathbb{R}^d$ with all its elements equal to one. 

As indicated above, our aim here is to show that equation (\ref{ho10}) has a global strong solution, for a.e. fixed $\omega$ and small enough initial data,  in the mild sense. To this end, we shall further develop the ideas in \cite{b1}, that treat the stochastic $3-D$ Navier-Stokes equation with diffusion coefficient linear in the solution, while we analyze the case with diffusion coefficient linear in the gradient of the solution. Besides proving existence and uniqueness of solutions to \eqref{ho10}, globally in time, for $\mathbb{P}-$a.e. fixed $\omega\in\Omega$, provided that the initial condition is small enough, we also prove their continuity in time, in the following sense: we shall prove that the solution is weakly$-*$ continuous with respect to the time variable. For the two-dimensional case in the dual of $L^\frac{1}{\gamma}(\mathbb{R}^2)\cap L^\frac{2p}{3p-4}(\mathbb{R}^2)$, for some $\frac{4}{3}<p<2$ and $0<\gamma<1$; while for the three-dimensional case, we shall show that it is weakly$-*$ continuous in the dual of $\left(L^3(\mathbb{R}^3)\cap L^\frac{3p}{4p-6}(\mathbb{R}^3)\right)^3$ for some $\frac{3}{2}<p<3$, see Theorem \ref{t1} and \ref{t2} below, respectively. Then,  we deduce the existence of a   solution of the $2-D$ Navier-Stokes equation in vorticity form, which is adapted to the Brownian filtration,  up to some  stopping time; and a similar result concerning the $3-D$ Navier-Stokes equations, in Section \ref{33} below. We emphasize that, when studing turbulence,  the vorticity is a tool of central importance. Therefore, treating the Navier-Stokes equation in the vorticity form and obtaining existence and uniqueness results for the model is of high interest for understanding turbulence.

The structure of the paper is as follows: In Section 2 we derive the transformed equation, which is no longer a stochastic PDE, but a deterministic PDE with a random parameter. In Section 3, we concentrate on the 2D-case and in Section 4 on the 3D-case. In Section 5, we prove the existence of a solution to the original equation \eqref{e1}, which is adapted to the filtration, but exists only up to some stopping time. This is done both in the 2D- and 3D-case. 

\section{The transformed equation}

Consider the vorticity function (the vorticity field, for the three-dimensional case)
$$U:=\nabla\times X=curl\ X;$$ and apply the $curl$ operator to equation (\ref{e1}).

 In the two-dimensional case, taking advantage of  the form of $a_{1i},...,a_{8i},\ i=1,2,...,N$,  in \eqref{ho4}, we obtain 
\begin{equation}\label{e}\left\{\begin{array}{l}\displaystyle dU=\Delta Udt+(X\cdot \nabla)Udt+\sum_{i=1}^N(B_i(t)+\theta_iI)U d\beta_i \text { in }(0,\infty)\times\mathbb{R}^2,\\
U(0,\xi)=U_0(\xi)=(curl\ x)(\xi),\ \xi\in \mathbb{R}^2,\end{array}\right.\ \end{equation}where, for all $t\geq0,$  $B_i(t):H^1(\mathbb{R}^2)\rightarrow L^2(\mathbb{R}^2)$ is defined as
\begin{equation}\label{e9}B_i(t)f:=\sigma_{i}(t)\mathbf{1}_2\cdot \nabla f,\ i=1,2,...,N.\end{equation}

While, in the three-dimensional case, we get by the special form of $A_i$ in (\ref{e700}),
\begin{equation}\label{e701}\left\{\begin{array}{l}\displaystyle dU=\Delta Udt+(X\cdot \nabla)Udt-(U\cdot \nabla)Xdt+\sum_{i=1}^N(B_i(t)+\theta_iI)U d\beta_i \text { in }(0,\infty)\times\mathbb{R}^3,\\
U(0,\xi)=U_0(\xi)=(curl\ x)(\xi),\ \xi\in \mathbb{R}^3,\end{array}\right.\ \end{equation}where, for all $t\geq0,$  $B_i(t): (H^1(\mathbb{R}^3))^3\rightarrow (L^2(\mathbb{R}^3))^3$ is defined as
\begin{equation}\label{e703}B_i(t)U:=\sigma_{i}(t)\mathbf{1}_3\cdot \nabla U,\ i=1,2,...,N.\end{equation}

We begin with  some useful observations concerning the operators introduced above, and state them in the following lemmas. First of all, since the functions $\sigma_i$ do not depend on the space variable $\xi$, we immediately see that $B_i$ commutes with $B_j$, for all $i,j=1,2,...,N$, also it commutes with the laplacean $\Delta$.
\begin{lemma}\label{l12} Let $d=2,3$.  For all $i=1,2,...,N,$ the operators $B_i(t),\ t\geq0,$ are skew-adjoint; and they generate $C_0$-groups, denoted by $e^{sB_i(t)},\ s\in\mathbb{R},\ t\geq0,\ i=1,2,...,N.$ Moreover, for all $1<q<\infty$, we have
\begin{equation}\label{e40}|e^{sB_i(t)}f|_q=|f|_q,\ \forall f\in L^q(\mathbb{R}^d),\ s\in\mathbb{R},\ t\geq0, i=1,2,...,N;\end{equation}and
\begin{equation}\label{e41}\begin{aligned}|\nabla(e^{sB_i(t)}f)|_q= |\nabla f|_q,\ \forall f\in W^{1,q}(\mathbb{R}^d), \ s\in\mathbb{R},\ t\geq0, i=1,2,...,N.\end{aligned}\end{equation}
\end{lemma}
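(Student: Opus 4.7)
The plan is to identify $e^{sB_i(t)}$ explicitly as a translation group, from which every assertion of the lemma is immediate. Since $B_i(t) = \sigma_i(t)\,\mathbf{1}_d\cdot\nabla$ has constant coefficients in the spatial variable (for each fixed $t,\omega$), the natural candidate is
\[
(T_s^{(i,t)} f)(\xi) := f\bigl(\xi + s\sigma_i(t)\mathbf{1}_d\bigr), \qquad s\in\mathbb{R}.
\]
I would first verify that $\{T_s^{(i,t)}\}_{s\in\mathbb{R}}$ is a strongly continuous group on $L^q(\mathbb{R}^d)$ for every $1<q<\infty$: the group law is immediate from translation in $\mathbb{R}^d$, strong continuity in $s$ is the standard $L^q$-continuity of translations, and differentiation in $s$ at $s=0$ on a core such as $C_c^\infty(\mathbb{R}^d)$ produces exactly $B_i(t)$. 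Hence this family coincides with the $C_0$-group generated by $B_i(t)$.

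The norm identities \eqref{e40} and \eqref{e41} then follow from translation invariance of Lebesgue measure. The change of variable $\eta = \xi + s\sigma_i(t)\mathbf{1}_d$ yields $|T_s^{(i,t)} f|_q = |f|_q$, giving \eqref{e40}. Because translations commute with every $\partial_j$, one has $\partial_j(T_s^{(i,t)} f) = T_s^{(i,t)}(\partial_j f)$ componentwise, so applying \eqref{e40} to each component of $\nabla f$ produces \eqref{e41}.

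For the skew-adjointness on $L^2(\mathbb{R}^d)$ with domain $H^1(\mathbb{R}^d)$, I would first establish antisymmetry by integration by parts on $C_c^\infty(\mathbb{R}^d)$,
\[
\int_{\mathbb{R}^d} (B_i(t) f)\,g\, d\xi = -\int_{\mathbb{R}^d} f\,(B_i(t) g)\, d\xi,
\]
the boundary terms vanishing since $\mathbf{1}_d$ is a fixed direction and the integrands have compact support. Density of $C_c^\infty$ in $H^1(\mathbb{R}^d)$ extends this to all of $H^1$. To upgrade antisymmetry to skew-adjointness, I would appeal to the explicit translation formula: since $T_s^{(i,t)}$ is a unitary group on $L^2(\mathbb{R}^d)$ by \eqref{e40} with $q=2$, Stone's theorem forces its generator $B_i(t)$ to be genuinely skew-adjoint on its maximal domain, which is readily identified with $H^1(\mathbb{R}^d)$.

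The entire content of the lemma ultimately reduces to the fact that a constant directional derivative generates the corresponding translation group on $\mathbb{R}^d$, so there is no substantive obstacle; the only point requiring a little care is distinguishing merely antisymmetric from skew-adjoint, which is precisely why routing the argument through the explicit unitary group $T_s^{(i,t)}$ and Stone's theorem is the cleanest path.
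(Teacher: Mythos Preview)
Your proposal is correct and follows essentially the same route as the paper: both arguments write $e^{sB_i(t)}$ explicitly as the translation $f\mapsto f(\cdot + s\sigma_i(t)\mathbf{1}_d)$, obtain \eqref{e40} from the unit Jacobian, and deduce \eqref{e41} from the fact that $\nabla$ commutes with translations (equivalently with $B_i(t)$). The only difference is that the paper outsources the skew-adjointness and group-generation claims to a reference, whereas you give a self-contained argument via Stone's theorem; one small slip in your write-up is the assertion that the maximal $L^2$-domain of the generator is $H^1(\mathbb{R}^d)$ --- in fact it is the larger space $\{f\in L^2:\mathbf{1}_d\cdot\nabla f\in L^2\}$, though $H^1$ is a core and this does not affect any of the lemma's conclusions.
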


\begin{proof}We shall argue likewise  in \cite[(B2) and (B3)]{i1}, since the operators $B_i$ are of the same type as those in \cite{i1}. Thus,  one may show that the operators $B_i(t)$ satisfy $B_i^*(t)=-B_i(t),$ where $B_i^*(t)$ stands for the adjoint operator of $B_i(t)$ in $L^2(\mathbb{R}^d)$; and they generate  $C_0-$groups in $L^2(\mathbb{R}^2)$ for each $t\geq0,\ i=1,2,...,N.$ Besides this, as in \cite[(B3)]{i1}  one may compute that
\begin{equation}\label{e10}e^{sB_i(t)}f(\xi)=f(z_i(s,t,\xi)),\ s\in\mathbb{R},\  t\geq0,\ \xi\in\mathbb{R}^d,\end{equation}
where
\begin{equation}\label{e13}z_i(s,t,\xi)=\sigma_{i}(t)s\mathbf{1}_d+\xi,\end{equation}for $i=1,2,...,N$, $s\in\mathbb{R},\ t\geq0, \xi\in\mathbb{R}^d.$

We go on noticing that the Jacobian of the transformation $z_i,\ i=1,2,...,N,$ is equal to one. This implies that, for each $f\in L^q(\mathbb{R}^d),\ 0<q<\infty,$ we have
\begin{equation}|e^{sB_i}f|_q=\left(\int_{\mathbb{R}^d}|f(z_i(s,t,\xi)|^qd\xi\right)^\frac{1}{q}=|f|_q,\ i=1,2,...,N,\end{equation}so (\ref{e40}) is proved.
In order to conclude with the proof, we notice that $\nabla$ commutes with $B_i(t)$, and so, via the above equality, for each $f\in W^{1,q}(\mathbb{R}^d)$, one may easily deduce (\ref{e41}) as-well.
\end{proof}

We may get a similar result concerning the operators $e^{\int_0^t B_i(s)d\beta_i},\ i=1,2,...,N$ and $e^{\theta_i\int_0^t B_i(s)ds},\ i=1,2,...,N.$ More precisely,
\begin{corollary}\label{cor} Let $d=2,3$.  For all $i=1,2,...,N,$ one may well-define the exponential $e^{\int_0^t B_i(s)d\beta_i}$ and $e^{\theta_i\int_0^t B_i(s)ds}.$ Moreover, for all $1<q<\infty$, we have
\begin{equation}\label{edi40}|e^{\int_0^tB_i(s)d\beta_i}f|_q=|e^{\theta_i\int_0^tB_i(s)ds}f|_q=|f|_q,\ \forall f\in L^q(\mathbb{R}^d),\ s\in\mathbb{R},\ t\geq0, i=1,2,...,N;\end{equation}and
\begin{equation}\label{edi41}\begin{aligned}|\nabla e^{\int_0^tB_i(s)d\beta_i}f|_q=|\nabla e^{\theta_i\int_0^tB_i(s)ds}f|_q=|\nabla f|_q,\ \forall f\in W^{1,q}(\mathbb{R}^d), \ s\in\mathbb{R},\ t\geq0, i=1,2,...,N.\end{aligned}\end{equation}
\end{corollary}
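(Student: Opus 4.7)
The plan is to reduce this corollary to Lemma \ref{l12} by observing that each $B_i(s) = \sigma_i(s)\,T$ is a scalar multiple of the single first-order operator $T := \mathbf{1}_d \cdot \nabla$. In particular, the family $\{B_i(s)\}_{s \ge 0}$ commutes pairwise (and each $B_i(s)$ commutes with $\Delta$), so at the level of generators one formally has
\[
\int_0^t B_i(s)\,d\beta_i(s) = S_i(t)\,T, \qquad \theta_i \int_0^t B_i(s)\,ds = R_i(t)\,T,
\]
where $S_i(t,\omega) := \int_0^t \sigma_i(s,\omega)\,d\beta_i(s,\omega)$ and $R_i(t,\omega) := \theta_i \int_0^t \sigma_i(s,\omega)\,ds$. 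Since $\sigma_i$ is continuous, $\mathcal{F}_t$-adapted, and $\int_0^\infty \sigma_i^2(s)\,ds < \infty$ $\mathbb{P}$-a.s., both $S_i(t,\omega)$ and $R_i(t,\omega)$ are well-defined real random variables with continuous trajectories.

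I would then \emph{define} the exponentials appearing in the statement by the pathwise translation formulas suggested by \eqref{e10}--\eqref{e13}: for every $f$ and a.e.\ $\omega$, set
\[
\bigl(e^{\int_0^t B_i(s)\,d\beta_i} f\bigr)(\xi) := f\bigl(\xi + S_i(t,\omega)\,\mathbf{1}_d\bigr), \qquad \bigl(e^{\theta_i \int_0^t B_i(s)\,ds} f\bigr)(\xi) := f\bigl(\xi + R_i(t,\omega)\,\mathbf{1}_d\bigr).
\]
This is a legitimate definition because $T$ generates the translation group $\tau_c f(\xi) = f(\xi + c\mathbf{1}_d)$ on $L^q(\mathbb{R}^d)$, and since the $B_i(s)$ commute among themselves, the group law $\tau_c \tau_{c'} = \tau_{c+c'}$ extends the scalar-exponential identity to exponentials of (Itô or Lebesgue) integrals of multiples of $T$.

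From this explicit representation the identities \eqref{edi40} and \eqref{edi41} are immediate and mirror the proof of Lemma \ref{l12}: the map $\xi \mapsto \xi + c\mathbf{1}_d$ has unit Jacobian, hence acts as an isometry on $L^q(\mathbb{R}^d)$ for every $1 < q < \infty$, and since $\nabla$ commutes with translations, the $W^{1,q}$ seminorm is preserved as well. The only mildly delicate point---hardly an obstacle, but one I would state with care---is the measurability and $\mathbb{P}$-null-set bookkeeping needed to ensure that the pathwise definition of $e^{\int_0^t B_i(s)\,d\beta_i}$ is consistent with (a continuous modification of) the underlying stochastic integral; this is routine under the stated continuity and adaptedness assumptions on $\sigma_i$.
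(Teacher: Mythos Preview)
Your argument is correct and follows essentially the same route as the paper: both reduce to the observation that $\int_0^t B_i(s)\,d\beta_i(s)$ and $\theta_i\int_0^t B_i(s)\,ds$ are scalar multiples of the single translation generator $T=\mathbf{1}_d\cdot\nabla$, so their exponentials act as translations with unit Jacobian and the conclusions of Lemma~\ref{l12} carry over verbatim. The only cosmetic difference is that the paper first rewrites $e^{\int_0^t B_i(s)\,d\beta_i}$ via the integration-by-parts identity $\int_0^t B_i(s)\,d\beta_i=\beta_i(t)B_i(t)-\int_0^t\beta_i(s)\,dB_i(s)$ before identifying each piece as a multiple of $T$, whereas you factor $T$ out of the stochastic integral directly; your route is slightly more streamlined but the content is the same.
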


\begin{proof}Note that 
$$e^{\int_0^t B_i(s)d\beta_i}f=e^{\beta_i(t)B_i(t)-\int_0^t\beta_i(s)B_i(s)ds}f,$$and that
$$\int_0^t\beta_i(s)B_i(s)fds=\int_0^t\beta_i(s)\sigma_i(s)ds \mathbf{1}_d\cdot \nabla f \text{ and } \theta_i\int_0^tB_i(s)fds=\int_0^t\theta_i\sigma_i(s)ds {1}_d\cdot \nabla f.$$ This means that, in fact, both $\int_0^t\beta_i(s)B_i(s)\ \cdot ds$ and $\theta_i\int_0^tB_i(s)\ \cdot ds$ are of similar form with $B_i$, with $\sigma_i$ replaced by $\int_0^t\beta_i(s)\sigma_i(s)ds$ and by $\int_0^t\theta_i\sigma_i(s)ds$, respectively. Therefore, arguing similarly as in the proof of Lemma \ref{l12}, one may show that indeed the exponential $e^{\int_0^t B_i(s)d\beta_i}$ and $e^{\theta_i\int_0^t B_i(s)ds}$ are well-defined and (\ref{edi40}) and (\ref{edi41}) hold true.

\end{proof}

Finally, let us show that we may well-define the exponential $e^{\pm\int_0^tB_i^2(s)ds},\ t\geq0, \  i=1,2,...,N,$ where $B_i$ are given by (\ref{e9}) or (\ref{e703}).
\begin{lemma}\label{l13}Let  $d=2,3$ and $1<q<\infty$. Then, it is possible to define the operator $e^{\frac{1}{2}\int_0^tB_i^2(s)ds},\ t\geq0,\  i=1,2,...,N,$ that is a contraction on $L^q(\mathbb{R}^d)$, i.e.
\begin{equation}\label{e401}\left|e^{\frac{1}{2}\int_0^tB_i^2(s)ds}f\right|_q\leq |f|_q,\ \forall f\in L^q(\mathbb{R}^d),\ t\geq0,\ i=1,2,...,N.\end{equation}Also, we have
\begin{equation}\label{e404}\left|\nabla\left(e^{\frac{1}{2}\int_0^tB_i^2(s)ds}f\right)\right|_q\leq |\nabla f|_q,\ \forall f\in W^{1,q}(\mathbb{R}^d),\ t\geq0,\ i=1,2,...,N.\end{equation}

Besides this, for each  $t\geq0$ and  $i=1,2,...,N$, the operator  $e^{\frac{1}{2}\int_0^tB_i^2(s)ds}$ is one-to-one on $L^q(\mathbb{R}^d)$. Therefore,  $e^{\frac{1}{2}\int_0^tB_i^2(s)ds}$ admits a left inverse, denoted by $e^{-\frac{1}{2}\int_0^tB_i^2(s)ds}$, for which there exists some positive constant $\mathcal{B}$ such that
\begin{equation}\label{e403}\left|e^{-\frac{1}{2}\int_0^tB_i^2(s)ds}f\right|_q\leq \mathcal{B} |f|_q,\ \forall f\in e^{\int_0^tB_i^2(s)ds}[L^q(\mathbb{R}^d)],\ t\geq0,\ i=1,2,...,N,\end{equation}and
\begin{equation}\label{e405}\left|\nabla\left(e^{-\frac{1}{2}\int_0^tB_i^2(s)ds}f\right)\right|_q\leq \mathcal{B} |\nabla f|_q,\ \forall f\in e^{\int_0^tB_i^2(s)ds}[W^{1,q}(\mathbb{R}^d)],\ t\geq0,\ i=1,2,...,N.\end{equation}
\end{lemma}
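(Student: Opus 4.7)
The plan is to recognize $\tfrac{1}{2}\int_0^t B_i^2(s)\,ds$ as a one-dimensional heat generator in the direction of $\mathbf{1}_d$, and then read off all four estimates from an explicit Gaussian convolution representation.

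First, since $\sigma_i(s)$ is independent of $\xi$, one computes $B_i^2(s)=\sigma_i^2(s)(\mathbf{1}_d\cdot\nabla)^2$, so with $\alpha_i(t):=\tfrac{1}{2}\int_0^t \sigma_i^2(s)\,ds$ (finite a.s.\ by the standing assumption $\sigma_i\in L^2(0,\infty)$) one has $\tfrac{1}{2}\int_0^t B_i^2(s)\,ds=\alpha_i(t)(\mathbf{1}_d\cdot\nabla)^2$. Introducing coordinates $\xi=\xi_\perp+\tau\,\mathbf{1}_d/\sqrt{d}$ with $\xi_\perp\perp\mathbf{1}_d$, one has $(\mathbf{1}_d\cdot\nabla)^2=d\,\partial_\tau^2$, so I would \emph{define}
$$\bigl(e^{\frac{1}{2}\int_0^t B_i^2(s)ds}f\bigr)(\xi_\perp,\tau):=\int_{\mathbb{R}} G_{d\alpha_i(t)}(\tau-s)\,f(\xi_\perp,s)\,ds,$$
where $G_r(y):=(4\pi r)^{-1/2}\exp(-y^2/(4r))$ is the $1$-D heat kernel (interpreted as the identity when $\alpha_i(t)=0$). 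This is well-defined for $f\in L^q(\mathbb{R}^d)$ as a convolution in the single variable $\tau$.

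Since $\|G_r\|_{L^1(\mathbb{R})}=1$, Young's convolution inequality applied in the $\tau$-slice, followed by Fubini in $\xi_\perp$, gives (\ref{e401}). Because the convolution is only in the $\tau$-variable, it commutes with every $\partial_j$, so $\nabla$ passes through $e^{\frac{1}{2}\int_0^t B_i^2(s)ds}$ and (\ref{e404}) follows by applying (\ref{e401}) componentwise to $\nabla f$.

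For injectivity, view $f\in L^q(\mathbb{R}^d)$ as a tempered distribution and take Fourier transforms: $e^{\frac{1}{2}\int_0^t B_i^2(s)ds}f=0$ yields $e^{-\alpha_i(t)(\mathbf{1}_d\cdot\xi)^2}\widehat{f}(\xi)=0$ in $\mathcal{S}'$, and since the multiplier is smooth and strictly positive, $\widehat{f}=0$, hence $f=0$. This gives a left inverse on the range, which is what is denoted $e^{-\frac{1}{2}\int_0^t B_i^2(s)ds}$. For $f\in e^{\int_0^t B_i^2(s)ds}[L^q(\mathbb{R}^d)]$, write $f=e^{\int_0^t B_i^2(s)ds}g$ with $g\in L^q(\mathbb{R}^d)$; the semigroup identity $G_{d\alpha_i(t)}*G_{d\alpha_i(t)}=G_{2d\alpha_i(t)}$ then yields $e^{-\frac{1}{2}\int_0^t B_i^2(s)ds}f=e^{\frac{1}{2}\int_0^t B_i^2(s)ds}g$, and applying (\ref{e401}) to the right-hand side produces the bound (\ref{e403}); (\ref{e405}) is identical after commuting $\nabla$ past the Gaussian convolution. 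The main technical point is this last step: the bound with a \emph{uniform} constant $\mathcal{B}$ relies on collapsing the formal inverse on the range to a genuine contraction applied to the preimage $g$, combined with the observation that $\alpha_i(t)\le\tfrac{1}{2}\int_0^\infty\sigma_i^2(s)\,ds<\infty$ a.s., so that constants coming from the Gaussian kernel can be controlled uniformly in $t$.
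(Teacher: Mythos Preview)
For \eqref{e401}, \eqref{e404} and the injectivity assertion your argument is correct and more direct than the paper's. The paper establishes that $O^2:=(\mathbf{1}_d\cdot\nabla)^2$ generates an $L^q$-contraction semigroup by verifying the Hille--Yosida resolvent bound through an explicit factorisation of $(\lambda I-O^2)^{-1}$ together with Jensen's inequality, then quotes a reference for analyticity, and deduces injectivity from analyticity via a time-backtracking argument. Your route---writing $e^{\alpha_i(t)O^2}$ as convolution with the one-dimensional heat kernel in the $\mathbf{1}_d$-direction, invoking Young's inequality for the contraction, and reading injectivity off the strictly positive Fourier multiplier $e^{-\alpha_i(t)(\mathbf{1}_d\cdot\xi)^2}$---is shorter and avoids the abstract machinery, while yielding exactly the same conclusions.

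Your derivation of \eqref{e403} and \eqref{e405}, however, has a gap. From $f=e^{\int_0^tB_i^2(s)ds}g$ and the semigroup property you correctly obtain $e^{-\frac12\int_0^tB_i^2(s)ds}f=e^{\frac12\int_0^tB_i^2(s)ds}g$, and then \eqref{e401} yields
\[
\Big|e^{-\frac12\int_0^tB_i^2(s)ds}f\Big|_q\le |g|_q .
\]
But \eqref{e403} asks for a bound by $\mathcal{B}\,|f|_q$, and since the forward operator is contractive one only has $|f|_q\le |g|_q$, which goes the wrong way. Your closing remark about controlling ``constants coming from the Gaussian kernel'' does not close this: no uniform $\mathcal{B}$ can exist. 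Already for $q=2$ and any fixed $\alpha=\alpha_i(t)>0$, choosing $g$ with $\widehat{g}$ supported near $\{\mathbf{1}_d\cdot\xi=R\}$ gives
\[
\frac{|e^{\alpha O^2}g|_2}{|e^{2\alpha O^2}g|_2}\sim e^{\alpha R^2}\longrightarrow\infty\quad (R\to\infty),
\]
so the left inverse is unbounded on $e^{2\alpha O^2}[L^2]$. The paper at this point appeals to the inverse mapping theorem, which is equally unjustified because the range of $e^{\alpha O^2}$ is dense but not closed in $L^q$; so the difficulty you run into is intrinsic to the formulation of \eqref{e403}--\eqref{e405} rather than to your method.
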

\begin{proof} Let any $1<q<\infty$ and $d=2,3$.  Set $O:H^1(\mathbb{R}^d)\rightarrow L^2(\mathbb{R}^d)$ as
$$O f:=\sum_{i=1}^d\partial_{i}f.$$ Notice that we have the equality
\begin{equation}\label{e400}\int_0^tB_i^2(s)ds=\int_0^t\sigma^2_i(s)ds\ O^2.\end{equation}
In the next lines we want to show that $O^2$ generates a $C_0-$semigroup in $L^q$. To this end we shall apply the Hille-Yosida's theorem. So, let any $f\in L^q$ and $\lambda>0$. We search for some $g$ such that
\begin{equation}\label{e305}\lambda g-O^2 g=f,\end{equation}or, equivalently,
\begin{equation}\label{e304}g-\frac{1}{\lambda}O^2g=\frac{1}{\lambda}f.\end{equation}To this purpose,  arguing as in \cite[(B3)]{i1}, we get that the solution $h$ to
\begin{equation}\label{e300}h+\frac{1}{\sqrt{\lambda}}Oh=\frac{1}{\lambda}f,\end{equation}is given by the formula
\begin{equation}\label{e301}h(\xi)=\frac{1}{\lambda}\int_0^\infty e^{-s}f\left(-\frac{1}{\sqrt{\lambda}}s\mathbf{1}_d+\xi\right)ds,\ \xi\in\mathbb{R}^d.\end{equation} Likewise, the solution $g$ to
\begin{equation}\label{e302}g-\frac{1}{\sqrt{\lambda}}Og=h,\end{equation}is given as
\begin{equation}\label{e303}g=\frac{1}{\lambda}\int_0^\infty e^{-\sigma}\left[\int_0^\infty e^{-s}f\left(\frac{1}{\sqrt{\lambda}}(\sigma-s)\mathbf{1}_d+\xi\right)ds\right]d\sigma\end{equation}Now, by (\ref{e300}) and (\ref{e302}) we see that $g$ satisfies (\ref{e304}), or, equivalently, (\ref{e305}). Thus,
\begin{equation}\label{e306}\begin{aligned}&|(\lambda I-O^2)^{-1}f|_q^q=|g|_q^q\\&
=\frac{1}{\lambda^q}\int_{\mathbb{R}^2}\left|\int_0^\infty\int_0^\infty e^{-s} e^{-\sigma}f\left(\frac{1}{\sqrt{\lambda}}(\sigma-s)\mathbf{1}_d+\xi\right)ds d\sigma\right|^qd\xi\\&
=\frac{1}{\lambda^q}\int_{\mathbb{R}^2}\left|\int_0^1\int_0^1 f\left(\frac{1}{\sqrt{\lambda}}(\ln s-\ln \sigma)\mathbf{1}_d+\xi\right)ds d\sigma\right|^qd\xi\\&
\text{(recalling that $q>1$, it follows by Jensen's inequality that)}\\&
\leq\frac{1}{\lambda^q}\int_{\mathbb{R}^2}\int_0^1\int_0^1 \left|f\left(\frac{1}{\sqrt{\lambda}}(\ln s-\ln \sigma)\mathbf{1}_d+\xi\right)\right|^qds d\sigma d\xi\\&
=\frac{1}{\lambda^q}\int_0^1\int_0^1\left[\int_{\mathbb{R}^2} \left|f\left(\frac{1}{\sqrt{\lambda}}(\ln s-\ln \sigma)\mathbf{1}_d+\xi\right)\right|^qd\xi\right]ds d\sigma \\&
\text{(noticing that the Jacobian of the transformation is equal to one)}\\&
=\frac{1}{\lambda^q}\int_0^1\int_0^1\left[\int_{\mathbb{R}^2} \left|f\left(\xi\right)\right|^qd\xi\right]ds d\sigma=\frac{1}{\lambda^q}|f|_q^q.
\end{aligned}\end{equation}Hence,
$$|(\lambda I-O^2)^{-1}f|_q\leq \frac{1}{\lambda}|f|_q,\ \forall \lambda>0,\ f\in L^q.$$Therefore, $O^2$ generates a $C_0-$ analytic semigroup on $L^q$, denoted by $e^{sO^2},\ s\geq0,$ of contractions, i.e., $|e^{sO^2}f|_q\leq|f|_q,\ f\in L^q,\ s\geq0.$ By \cite{anal}, $e^{sO^2},\ s\geq0,$ is analytic. Hence, recalling (\ref{e400}), we conclude that we may well-define the exponential $e^{\frac{1}{2}\int_0^tB_i^2(s)ds},\ i=1,2,...,N,$ and we have
\begin{equation}\left|e^{\frac{1}{2}\int_0^tB_i^2(s)ds}f\right|_q\leq |f|_q,\ \forall f\in L^q,\ t\geq0,\ i=1,2,...,N.\end{equation}Also, noticing that $\nabla$ commutes with $B_i(t)$, we have the first part of the lemma proved.

Regarding the last part of the lemma, we show that the semigroup generated by $O^2$ is one-to-one in $L^q$. To this aim, let us assume that for some $t>0$ and $f\in L^q(\mathbb{R}^d)$ we have
$$e^{tO^2}f=0.$$ Define 
$$t_0:=\inf\left\{s>0:\ e^{sO^2}f=0\right\}$$that is less or equal to $t$. Then by right strong continuity we have  
$$e^{t_0O^2}f=0.$$ We know that $e^{sO^2}f$ is, in fact, the solution $w(s)$ to the equation
$$\partial_t w=O^2w \text{ in }(0,\infty)\times \mathbb{R}^d;\ w(0)=f \text{ in }\mathbb{R}^d.$$ We have that $w(t_0)=0$, so, consequently, $\partial_tw(t_0)=0.$ But, then, since $\partial_t^2 w=O^2\partial_tw$, we get that $\partial_t^2w(t_0)=0$, as-well. Continuing with this argument, it yields that all the time-derivatives of $w$ in $t_0$ are equal to zero. Recalling the analyticity of the semigroup, we deduce that there exists $t'<t_0$ such that $e^{t'O^2}f=0$. Therefore, $t_0=0$ and so, $f=e^{0O^2}f=0.$ 

To conclude  the proof of the lemma, remember that

$$e^{\frac{1}{2}\int_0^t B_i^2(s)ds}=e^{\frac{1}{2}\int_0^t\sigma_i^2(s)ds\ O^2}.$$ Hence one can define the inverse $e^{-\frac{1}{2}\int_0^t B_i^2(s)ds}$, then, by  the inverse mapping theorem, the fact that $\int_0^\infty \sigma_i^2(s)ds$ is finite , and the fact that $\nabla$ commutes with $B_i(t),\ t\geq0,\ i=1,2,...,N,$ we immediately obtain (\ref{e403}) and (\ref{e405})

\end{proof}

\section{The existence results in the $2-D$ case}
Now, we  place ourselves  in the two dimensional case. Recall that $X$ may be expressed in terms of the vorticity $U$ as $X=K(U)$, where the operator $K$ is the Biot-Savart integral operator 
\begin{equation}\label{k}K(f)(\xi)=\frac{1}{2\pi}\int_{\mathbb{R}^2}\frac{(\xi-\bar{\xi})^\perp}{|\xi-\bar{\xi}|^2}f(\bar{\xi})d\bar{\xi},\ \xi\in\mathbb{R}^2.\end{equation} Hence,  equation (\ref{e}) may be equivalently written as
\begin{equation}\label{e2000}\left\{\begin{array}{l}\displaystyle dU=\Delta Udt+(K(U)\cdot \nabla)Udt+\sum_{i=1}^N(B_i(t)+\theta_i)U d\beta_i \text { in }(0,\infty)\times\mathbb{R}^2,\\
U(0,\xi)=U_0(\xi),\ \xi\in \mathbb{R}^2.\end{array}\right.\ \end{equation}

In order to reduce the SPDE (\ref{e2000}) to a random PDE, we consider the rescale
\begin{equation}\label{y*} U:=e^{\sum_{i=1}^N\left[\int_0^tB_i(s)d\beta_i+\theta_i\beta_i-\frac{1}{2}\int_0^tB_i^2(s)ds-\frac{1}{2}\theta_i^2t-\theta_i\int_0^tB_i(s)ds\right]}y.\end{equation} By Lemmas \ref{l12},\ \ref{l13} and Corollary \ref{cor}, we have that the operator 
\begin{equation}\label{e27*}\Gamma(t):=e^{\sum_{i=1}^N\left[\int_0^tB_i(s)d\beta_i+\theta_i\beta_i-\frac{1}{2}\int_0^tB_i^2(s)ds-\frac{1}{2}\theta_i^2t-\theta_i\int_0^tB_i(s)ds\right]}\end{equation} is well-defined on $L^q(\mathbb{R}^2)$, $1<q<\infty$, and it is left invertible. We set $\Gamma^{-1}$ for its inverse. 

Since for all $\phi\in L^p$ we have
$$e^{\sum_{i=1}^N[\int_0^t B_i(s)d\beta_i-\frac{1}{2}\int_0^t B_i^2(s)ds]}\phi=\phi+\int_0^t e^{\sum_{i=1}^N[\int_0^s B_i(\tau)d\beta_i-\frac{1}{2}\int_0^s B_i^2(\tau)d\tau]}\left(\sum_{i=1}^N B_i(t)\phi d\beta_i\right),$$ simple computations show that
\begin{equation}\label{edi1}d\Gamma(t) \phi=\Gamma(t)\left[\sum_{i=1}^N(B_i(t)+\theta_i)\phi d\beta_i(t)\right].\end{equation}

Then, similarly as in   \cite[Proposition 3.23 (iii)]{i1}, one may deduce that $y$ satisfies 
\begin{equation}\label{e26*}\begin{aligned}\frac{d y}{dt}=&\Delta y(t)+\Gamma^{-1}(t)[K(\Gamma(t)y(t))\cdot \nabla](\Gamma(t)y(t)),\ t>0;\ y(0)=U_0.\end{aligned}\end{equation}
We write equation (\ref{e26*}) in the mild formulation as
\begin{equation}\label{e52*}y(t)=G(y(t)):=e^{t\Delta}U_0+F(y)(t),\ t\geq0,\end{equation}where
\begin{equation}\label{e53*}\begin{aligned}F(f)(t):&=\int_0^te^{(t-s)\Delta}\Gamma^{-1}(s)[K(\Gamma(s)f(s))\cdot \nabla](\Gamma(s)f(s))ds,\ t\geq0.\end{aligned}\end{equation}Here,
$$(e^{t\Delta}g)(\xi):=\frac{1}{4\pi t}\int_{\mathbb{R}^2}e^{-\frac{|\xi-\overline{\xi}|^2}{4t}}g(\overline{\xi})d\overline{\xi},\ t\geq0,\ \xi\in\mathbb{R}^2.$$
For latter purpose, one can easily show that for $1<\alpha\leq \beta<\infty$, we have, for some $c>0$, the estimates

\begin{equation}\label{e55} |e^{t\Delta}g|_\beta\leq ct^{\frac{1}{\beta}-\frac{1}{\alpha}}|g|_\alpha,\ g\in L^\alpha(\mathbb{R}^2),\end{equation}and
\begin{equation}\label{e56}|\partial_je^{t\Delta}g|_\beta\leq c t^{\frac{1}{\beta}-\frac{1}{\alpha}-\frac{1}{2}}|g|_\alpha,\ u\in L^\alpha(\mathbb{R}^2),\ j=1,2.\end{equation}

The following theorem is the main result of this work concerning the 2-D case.
\begin{theorem}\label{t1}Let $\frac{4}{3}<p<2$ and $0<\frac{3}{2}-\frac{2}{p}<\gamma<1-\frac{1}{p}<\frac{3}{2}-\frac{1}{p}<1$. Let $\Omega_0:=\left\{\eta_\infty<\infty\right\}$ and consider  (\ref{e52*}) for fixed $\omega\in \Omega_0$. Then,  $\mathbb{P}(\Omega_0)=1$ and there is a positive constant $C$ independent of $\omega\in\Omega_0$ such that, if $U_0\in L^\frac{1}{1-\gamma}(\mathbb{R}^2)$ is such as
\begin{equation}\label{e1100}\eta_\infty |U_0|_\frac{1}{1-\gamma}\leq C,\end{equation}then the random equation (\ref{e52*}) has a unique solution $y\in \mathcal{Z}_p$ which satisfies
$$[K(\Gamma y)\cdot \nabla](\Gamma y)\in L^1(0,\infty; L^\frac{2p}{4-p}(\mathbb{R}^2)).$$ Here
$$\eta_\infty:=  e^{\sup_{0\leq s<\infty}\sum_{i=1}^N[\beta_i(s)\theta_i-\frac{s}{4}\theta_i^2]},$$ 
 and $\mathcal{Z}_p$ is defined by
\begin{equation}\label{Z}\mathcal{Z}_p:=\left\{f=f(t,\xi):\ t^{1-\frac{1}{p}-\gamma}f\in C_b([0,\infty);L^p(\mathbb{R}^2)),\ t^{\frac{3}{2}-\frac{1}{p}-\gamma}\partial_j f\in C_b([0,\infty);L^p(\mathbb{R}^2)),\ j=1,2\right\}.\end{equation}

Moreover, for each $\phi \in L^\frac{1}{\gamma}(\mathbb{R}^2)\cap L^\frac{2p}{3p-4}(\mathbb{R}^2)$, the function 
$$t\rightarrow \int_{\mathbb{R}^2}y(t,\xi)\phi(\xi)d\xi$$ is continuous on $[0,\infty)$. The map $U_0\rightarrow y$ is Lipschitz from $L^\frac{1}{1-\gamma}(\mathbb{R}^2)$ to $\mathcal{Z}_p$ .

In particular, the vorticity equation (\ref{e2000}) has a unique solution $U$ such that $\Gamma^{-1} U\in\mathcal{
Z}_p.$
\end{theorem}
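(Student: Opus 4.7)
The plan is to carry out a Kato--Fujita fixed point argument on $\mathcal{Z}_p$ for the integral equation (\ref{e52*}), path-by-path for $\omega\in\Omega_0$. The main ingredients will be the heat kernel estimates (\ref{e55})--(\ref{e56}), the Biot--Savart mapping $K: L^p(\mathbb{R}^2)\to L^{2p/(2-p)}(\mathbb{R}^2)$ for $1<p<2$, the norm bounds for $\Gamma$ and $\Gamma^{-1}$ coming from Lemmas \ref{l12}, \ref{l13} and Corollary \ref{cor}, and a decay estimate for the scalar factor in $\Gamma$.

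First I would verify $\mathbb{P}(\Omega_0)=1$. Since $\beta_i(s)/s\to 0$ a.s. and $\theta_i>0$, the process $s\mapsto\sum_i[\theta_i\beta_i(s)-\tfrac{s}{4}\theta_i^2]$ tends to $-\infty$ a.s., so its supremum over $[0,\infty)$ is a.s.\ finite. Setting $\mu(s):=\exp\bigl(\sum_i[\theta_i\beta_i(s)-\tfrac12\theta_i^2 s]\bigr)$, the elementary identity $\mu(s)=\exp\bigl(\sum_i[\theta_i\beta_i(s)-\tfrac{s}{4}\theta_i^2]\bigr)\cdot e^{-cs}$ with $c:=\sum_i\theta_i^2/4>0$ gives the key bound $\mu(s)\le\eta_\infty e^{-cs}$. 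Combined with Lemmas \ref{l12}, \ref{l13}, and Corollary \ref{cor}, this yields $|\Gamma(s)f|_q\le \mu(s)|f|_q$ and $|\Gamma^{-1}(s)f|_q\le \mathcal{B}\mu(s)^{-1}|f|_q$, and similarly for $\nabla$. The exponential decay $e^{-cs}$ absorbed through $\Gamma^{-1}\circ(\Gamma\otimes\Gamma)$ in the nonlinearity is what will make the bounds uniform on $[0,\infty)$ rather than just locally in time.

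Next, the contraction step. For the linear part, (\ref{e55})--(\ref{e56}) applied with $\alpha=1/(1-\gamma)$, $\beta=p$, yield $\|e^{\cdot\Delta}U_0\|_{\mathcal{Z}_p}\le c|U_0|_{1/(1-\gamma)}$. For the nonlinear integrand $\Phi(s):=\Gamma^{-1}(s)[K(\Gamma(s)y(s))\cdot\nabla](\Gamma(s)y(s))$, combining Biot--Savart with Hölder gives
\[
|\Phi(s)|_{2p/(4-p)}\le C\,\mathcal{B}\,\mu(s)\,|y(s)|_p\,|\nabla y(s)|_p\le C\,\mathcal{B}\,\eta_\infty e^{-cs}|y(s)|_p|\nabla y(s)|_p,
\]
which in particular recovers the claimed $L^1(0,\infty;L^{2p/(4-p)})$ integrability once the a priori bound on $y\in\mathcal{Z}_p$ is established. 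Then (\ref{e55}) with $\alpha=2p/(4-p)$, $\beta=p$ (and the analogous gradient estimate for $\nabla F(y)$) together with $|y(s)|_p|\nabla y(s)|_p\le R^2 s^{-(5/2-2/p-2\gamma)}$ for $\|y\|_{\mathcal{Z}_p}\le R$, leads via a Beta-function computation to the uniform bound
\[
\|F(y)\|_{\mathcal{Z}_p}\le C\eta_\infty R^2,
\]
where the small-$t$ exponent comes out to $t^\gamma$ and the large-$t$ behaviour is $t^{3/2-2/p-\gamma}\to 0$ thanks to $\gamma>3/2-2/p$ and the $e^{-cs}$ decay. Choosing $R\asymp|U_0|_{1/(1-\gamma)}$ and imposing $\eta_\infty|U_0|_{1/(1-\gamma)}\le C$ makes $G$ map a closed ball of $\mathcal{Z}_p$ into itself; the bilinear structure of $F$ and the same estimate applied to $y_1-y_2$ produces a contraction constant $\le C\eta_\infty R<1$, giving unique existence and Lipschitz dependence $U_0\mapsto y$.

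For the weak-$*$ continuity, I would split $y=e^{t\Delta}U_0+F(y)$. Pairing with $\phi\in L^{1/\gamma}(\mathbb{R}^2)$ handles the linear part via strong continuity of the heat semigroup on $L^{1/(1-\gamma)}$ and the density of Schwartz functions in its predual. For the Duhamel term I would pair with $\phi\in L^{2p/(3p-4)}(\mathbb{R}^2)=(L^{2p/(4-p)})^*$, use the $L^1_{\mathrm{loc}}$ bound on $\Phi$ established above together with $L^p$-continuity of $e^{(t-s)\Delta}$, and conclude by dominated convergence. Finally, the last sentence is immediate: setting $U:=\Gamma y$ and using (\ref{edi1}) together with Itô's formula applied to the transformation (\ref{y*}) recovers a solution of (\ref{e2000}) with $\Gamma^{-1}U=y\in\mathcal{Z}_p$, and uniqueness is inherited from that of $y$. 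The main obstacle I anticipate is bookkeeping: keeping track of the two competing time weights in $\mathcal{Z}_p$, of the $\mu(s)$ factors (so as to actually use the $e^{-cs}$ gain from $\Gamma^{-1}\circ\Gamma\otimes\Gamma$), and of the interval $(4/3,2)\ni p$ so that every exponent appearing in the Beta integrals remains $>-1$.
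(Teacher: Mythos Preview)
Your proposal is correct and follows essentially the same Kato--Fujita fixed-point scheme as the paper, with the same key observation that the scalar factor in $\Gamma$ produces the decay $\mu(s)\le\eta_\infty e^{-cs}$ which makes the Duhamel bound uniform on $(0,\infty)$. Two minor points: the constant $\mathcal{B}$ from Lemma~\ref{l13} actually sits on $\Gamma$ rather than on $\Gamma^{-1}$ (cf.\ (\ref{e30}))---harmless for the argument---and, in place of your small-/large-$t$ split, the paper simply bounds $e^{-cs}\le C_\gamma\, s^{-\gamma}$ once and for all, which collapses the time integral into a single Beta function uniform in $t$ and avoids the case distinction you anticipate.
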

We notice that, likewise in \cite[Remark 1.2]{b1} one can show that the condition (\ref{e1100}) is not void. The proof of Theorem \ref{t1} will be given in Section \ref{s3} below.

To prove our theorem, we shall rely on the following  two immediate results concerning the operators $K$ and $\Gamma$ introduced by (\ref{k}) and (\ref{e27*}), respectively.
\begin{lemma}For each $1<q<\infty$  we have
\begin{equation}\label{e30}|\Gamma(t)f|_q\leq \mathcal{B}  e^{\sum_{i=1}^N\left[\beta_i(t)\theta_i-\frac{t}{2}\theta_i^2\right]}\left|f\right|_q \text{ and }|\Gamma^{-1}(t)f|_q\leq  e^{\sum_{i=1}^N\left[-\beta_i(t)\theta_i+\frac{t}{2}\theta_i^2\right]}\left|f\right|_q\end{equation} for all $ t\geq0, f\in L^q(\mathbb{R}^2)$; and
\begin{equation}\label{e31}|\nabla(\Gamma(t)f)|_q\leq \mathcal{B}  e^{\sum_{i=1}^N\left[\beta_i(t)\theta_i-\frac{t}{2}\theta_i^2\right]}\left|\nabla f\right|_q, \text{ and } |\nabla(\Gamma^{-1}(t)f)|_q\leq e^{\sum_{i=1}^N\left[-\beta_i(t)\theta_i+\frac{t}{2}\theta_i^2\right]}\left|\nabla f\right|_q\end{equation}
for all $t\geq0,\  f\in W^{1,q}(\mathbb{R}^2).$
\end{lemma}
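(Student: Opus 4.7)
The plan is to split $\Gamma(t)$ into a product of pairwise commuting exponentials and apply the norm bounds already established in Corollary \ref{cor} and Lemma \ref{l13} to each factor. Since each $B_i(s)$ is a scalar multiple of the constant-coefficient operator $\mathbf{1}_2\cdot\nabla$, the operators $\int_0^t B_i(s)\,d\beta_i$, $\int_0^t B_i^2(s)\,ds$, $\int_0^t B_i(s)\,ds$, together with the purely scalar terms $\theta_i\beta_i(t)$ and $\tfrac12\theta_i^2 t$, all mutually commute, so I can write
$$\Gamma(t) \;=\; \prod_{i=1}^N e^{\int_0^t B_i(s)\,d\beta_i}\cdot e^{\theta_i\beta_i(t)}\cdot e^{-\frac12\int_0^t B_i^2(s)\,ds}\cdot e^{-\frac12\theta_i^2 t}\cdot e^{-\theta_i\int_0^t B_i(s)\,ds}.$$

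Next I would estimate each factor on $L^q(\mathbb{R}^2)$: by Corollary \ref{cor}, the operators $e^{\int_0^t B_i(s)\,d\beta_i}$ and $e^{-\theta_i\int_0^t B_i(s)\,ds}$ are $L^q$-isometries; by Lemma \ref{l13}, the inverse operator $e^{-\frac12\int_0^t B_i^2(s)\,ds}$ is bounded on $L^q$ by the constant $\mathcal{B}$; the scalar factors $e^{\theta_i\beta_i(t)}$ and $e^{-\frac12\theta_i^2 t}$ pull out of the $L^q$-norm. Multiplying these bounds and absorbing the resulting $\mathcal{B}^N$ into a single constant still denoted $\mathcal{B}$ yields the first inequality in (\ref{e30}). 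For $\Gamma^{-1}(t)$ the analogous factorization has all signs in the exponents reversed; in particular the $B_i^2$ factor becomes $e^{+\frac12\int_0^t B_i^2(s)\,ds}$, which by Lemma \ref{l13} is an $L^q$-contraction, so no $\mathcal{B}$ appears and I obtain the second inequality of (\ref{e30}). For the gradient estimates (\ref{e31}) I would use that $\nabla$ commutes with every $B_i(s)$ (as noted already), hence with every factor, hence with $\Gamma(t)$ and $\Gamma^{-1}(t)$. Thus $\partial_j\Gamma(t)f = \Gamma(t)\partial_j f$ for $j=1,2$, and applying the $L^q$-bounds just derived componentwise to $\nabla f$ gives (\ref{e31}).

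The only real obstacle is justifying rigorously the factorization of the exponential into commuting pieces, since these are exponentials of unbounded operators. I would resolve this either by invoking the explicit representation of $e^{sB_i(t)}$ as a translation in the $\mathbf{1}_2$-direction given by (\ref{e10})--(\ref{e13}), combined with the semigroup representation of $e^{\pm\frac12\int_0^t B_i^2(s)\,ds}$ constructed in the proof of Lemma \ref{l13} (the composition of translations is again a translation, and the scalar exponentials trivially commute with everything), or, alternatively, by appealing to the Trotter product formula for the commuting $C_0$-(semi)groups involved. Once the factorization is granted, the rest is pure bookkeeping over the norm estimates (\ref{edi40}), (\ref{edi41}), (\ref{e401})--(\ref{e405}).
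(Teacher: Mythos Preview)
Your proposal is correct and is essentially the same argument the paper gives: the paper's proof consists of a single sentence invoking exactly the estimates (\ref{e40}), (\ref{edi40}), (\ref{e401}), (\ref{e403}) for (\ref{e30}) and (\ref{e41}), (\ref{edi41}), (\ref{e404}), (\ref{e405}) for (\ref{e31}), which is precisely the factorization-and-bookkeeping you spell out. Your remark that the product over $i$ produces $\mathcal{B}^N$ rather than $\mathcal{B}$ is accurate; the paper is simply abusing notation for constants, as you correctly anticipate.
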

\begin{proof}Recalling the definition of $\Gamma(t)$ given by (\ref{e27*}), it is easy to see that the inequality in (\ref{e30}) yields from (\ref{e40}), (\ref{edi40}) (\ref{e403}) and (\ref{e401}), while (\ref{e31}) follows from (\ref{e41}),  (\ref{edi41}), (\ref{e405}) and (\ref{e404}). 
\end{proof}

\begin{lemma}\label{l2}Let $r=\frac{2p}{2-p},\ \frac{4}{3}<p<2,\ q=\frac{2r}{4+r}>1.$ Then we have
\begin{equation}\label{e50}\left|\Gamma^{-1}(t)[K(\Gamma(t)f)\cdot \nabla](\Gamma(t)f)\right|_q\leq   \mathcal{B}^2e^{\sum_{i=1}^N\left[\beta_i(t)\theta_i-\frac{t}{2}\theta_i^2\right]}|f|_{p}|\nabla f|_{p},\ \forall f\in W^{1,p}(\mathbb{R}^2).\end{equation}
\end{lemma}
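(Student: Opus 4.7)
The strategy is a three-line computation: bound $\Gamma^{-1}$, apply Hölder, then use a Hardy--Littlewood--Sobolev estimate for the Biot--Savart kernel and finally insert the $\Gamma$-bounds from (\ref{e30})--(\ref{e31}). First, by (\ref{e30}) applied in $L^q$,
$$\left|\Gamma^{-1}(t)[K(\Gamma(t)f)\cdot\nabla](\Gamma(t)f)\right|_q\leq e^{\sum_{i=1}^N[-\beta_i(t)\theta_i+\frac{t}{2}\theta_i^2]}\left|[K(\Gamma(t)f)\cdot\nabla](\Gamma(t)f)\right|_q,$$
so the whole problem reduces to bounding the product $[K(\Gamma f)\cdot\nabla](\Gamma f)$ in $L^q$.

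Next, I would check that the exponents match up for a single Hölder step with exponents $r$ and $p$: since $p<2$, $\frac{1}{p}=\frac{1}{r}+\frac{1}{2}$ by the definition $r=\frac{2p}{2-p}$, and one computes
$$\frac{1}{q}=\frac{4+r}{2r}=\frac{2}{r}+\frac{1}{2}=\frac{1}{r}+\frac{1}{p},$$
so Hölder's inequality gives
$$\bigl|[K(\Gamma f)\cdot\nabla](\Gamma f)\bigr|_q\leq |K(\Gamma f)|_r\,|\nabla(\Gamma f)|_p.$$
The range $\frac{4}{3}<p<2$ is precisely what keeps $q>1$ and puts $(p,r)$ in the admissible range of the Biot--Savart operator.

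The one nontrivial ingredient is the $L^p\to L^r$ boundedness of $K$ on $\mathbb{R}^2$. Since the kernel $\xi\mapsto \xi^{\perp}/|\xi|^2$ is of homogeneity $-1$, this is a standard Hardy--Littlewood--Sobolev (or weak-type Young) estimate: for $\frac{1}{r}=\frac{1}{p}-\frac{1}{2}$ with $1<p<2$, one has $|K(g)|_r\leq c_p|g|_p$. I would just cite this classical fact; it is the only step that is not a direct consequence of the preceding lemmas.

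Combining these ingredients and then applying (\ref{e30}) to $\Gamma(t)f$ in $L^p$ and (\ref{e31}) to $\nabla(\Gamma(t)f)$ in $L^p$, the two $\mathcal{B}e^{\sum[\beta_i\theta_i-t\theta_i^2/2]}$ factors multiply to produce $\mathcal{B}^2 e^{2\sum[\beta_i\theta_i-t\theta_i^2/2]}$, which combines with the $\Gamma^{-1}$ factor $e^{\sum[-\beta_i\theta_i+t\theta_i^2/2]}$ to leave exactly $\mathcal{B}^2 e^{\sum[\beta_i(t)\theta_i-\frac{t}{2}\theta_i^2]}|f|_p|\nabla f|_p$, giving (\ref{e50}) (with the HLS constant $c_p$ absorbed into $\mathcal{B}$ without loss of generality). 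There is no real obstacle; the only thing to be careful about is verifying the algebra $\frac{1}{q}=\frac{1}{r}+\frac{1}{p}$ so that the Hölder step is legitimate, and confirming that the range of $p$ keeps all three exponents $p,q,r$ strictly between $1$ and $\infty$.
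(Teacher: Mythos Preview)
Your proof is correct and follows essentially the same route as the paper: bound $\Gamma^{-1}$ via (\ref{e30}), apply H\"older with $\frac{1}{q}=\frac{1}{r}+\frac{1}{p}$, use the $L^p\to L^r$ mapping property of the Biot--Savart operator, and finish with (\ref{e30})--(\ref{e31}). The only cosmetic difference is that the paper phrases the Biot--Savart estimate as a Riesz potential bound $|K(g)|_r\le C|g|_{2r/(2+r)}$ and then observes $\tfrac{2r}{2+r}=p$, which is exactly your HLS step written in different notation.
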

\begin{proof}Notice that  we have $\frac{1}{q}=\frac{1}{r}+\frac{1}{p}$. By (\ref{e30}) and H$\ddot{o}$lder's inequality, we obtain
\begin{equation}\label{e110}\begin{aligned}&\left|\Gamma^{-1}(t)[K(\Gamma(t)f)\cdot \nabla](\Gamma(t)f)\right|_q\leq e^{\sum_{i=1}^N\left[-\beta_i(t)\theta_i+\frac{t}{2}\theta_i^2\right]}|K(\Gamma(t)f)|_{r}|\nabla(\Gamma(t)f)|_{p}\\&
\text{( recalling the classical estimate for the Riesz potentials (see \cite[p. 119]{rt}) )}\\&
\leq e^{\sum_{i=1}^N\left[-\beta_i(t)\theta_i+\frac{t}{2}\theta_i^2\right]}|\Gamma(t)f|_{\frac{2r}{2+r}}|\nabla(\Gamma(t)f)|_{p}\\&
\left(\text{ noticing that $\frac{2r}{2+r}=p$ and taking advantage of relations (\ref{e30}) and (\ref{e31})}\right)\\&
\leq  \mathcal{B}^2e^{\sum_{i=1}^N\left[\beta_i(t)\theta_i-\frac{t}{2}\theta_i^2\right]}|f|_p|\nabla f|_{p},
\end{aligned}\end{equation} thereby completing the proof.
\end{proof}

\subsection{Proof of Theorem \ref{t1}}\label{s3}

From now on, we fix $p, q, r$ as in Lemma   \ref{l2}, i.e.,
\begin{equation}\label{e51}\frac{4}{3}<p<2,\  r=\frac{2p}{2-p},\ q=\frac{2p}{4-p}>1.\end{equation}
In the following we shall estimate the quantities $|F(f(t))|_p$ and $|\nabla F(f(t))|_p,$ where $F$ is defined by (\ref{e53*}). To this end, since $p>q>1$, we may take in (\ref{e55}) $\alpha=q$ and $\beta=p$, to obtain that
\begin{equation}\label{e57}\begin{aligned}&|F(f)(t)|_p\leq\\&
\left|\int_0^te^{(t-s)\Delta}\Gamma^{-1}(s)[K(\Gamma(s)f(s))\cdot \nabla](\Gamma(s)f(s))ds\right|_p \\&
\leq c\int_0^t(t-s)^{\frac{1}{p}-\frac{1}{q}}|\Gamma^{-1}(s)[K(\Gamma(s)f(s))\cdot \nabla](\Gamma(s)f(s))|_qds\\&
(\text{ invoke (\ref{e50})})\\&
\leq c\mathcal{B}^2\int_0^t(t-s)^{\frac{1}{2}-\frac{1}{p}}e^{\sum_{i=1}^N[\beta_i(s)\theta_i-\frac{s}{2}\theta_i^2]}|f(s)|_p|\nabla f(s)|_pds\\&
=c\mathcal{B}^2\int_0^t(t-s)^{\frac{1}{2}-\frac{1}{p}}e^{\sum_{i=1}^N[\beta_i(s)\theta_i-\frac{s}{4}\theta_i^2]}e^{-\frac{s}{4}\theta_i^2}|f(s)|_p|\nabla f(s)|_pds\\&
\leq c\mathcal{B}^2\eta_t\max\left\{1,\frac{4}{\theta_i^2},\ i=1,2,...,N\right\}\int_0^t(t-s)^{\frac{1}{2}-\frac{1}{p}}s^{-\gamma}|f(s)|_p|\nabla f(s)|_pds,\end{aligned}\end{equation}where
\begin{equation}\label{e80}\eta_t:=e^{\sup_{0\leq s\leq t}\sum_{i=1}^N[\beta_i(s)\theta_i-\frac{s}{4}\theta_i^2]},\end{equation}and $\gamma>0$ was chosen such that
\begin{equation}\label{e81}1>\gamma>\frac{3}{2}-\frac{2}{p}>0.\end{equation}

In the same manner one may obtain also that
\begin{equation}\label{e60}\begin{aligned}|\nabla F(f)(t)|_p\leq& c\mathcal{B}^2\eta_t\max\left\{1,\frac{4}{\theta_i^2},\ i=1,2,...,N\right\}\int_0^t(t-s)^{-\frac{1}{p}}s^{-\gamma}|f(s)|_p|\nabla f(s)|_pds.\end{aligned}\end{equation}

Next, we consider the Banach space $\mathcal{Z}_p$ defined by
$$\mathcal{Z}_p:=\left\{f:\ t^{1-\frac{1}{p}-\gamma}f\in C_b([0,\infty);L^p),\ t^{\frac{3}{2}-\frac{1}{p}-\gamma}\partial_j f\in C_b([0,\infty);L^p),\ j=1,2\right\},$$ endowed with the norm
$$\|f\|:=\sup_{t>0}\left\{t^{1-\frac{1}{p}-\gamma}|f(t)|_p+t^{\frac{3}{2}-\frac{1}{p}-\gamma}|\nabla f(t)|_p\right\}.$$Easily seen, we have that
\begin{equation}\label{e61}|f(t)|_p|\nabla f(t)|_p\leq t^{\frac{2}{p}-\frac{5}{2}+2\gamma}\|f\|^2,\ \forall f\in \mathcal{Z}_p,\ t>0.\end{equation}

It yields from (\ref{e57}) that for $f\in \mathcal{Z}_p$, we have
\begin{equation}\label{e62}\begin{aligned}|F(f)(t)|_p&\leq  c\mathcal{B}^2\eta_t \max\left\{1,\frac{4}{\theta_i^2},\ i=1,2,...,N\right\} \|f\|^2 \int_0^t (t-s)^{\frac{1}{2}-\frac{1}{p}}s^{\frac{2}{p}-\frac{5}{2}+\gamma}ds\\&
=c\mathcal{B}^2\eta_t \max\left\{1,\frac{4}{\theta_i^2},\ i=1,2,...,N\right\} \|f\|^2 t^{\frac{1}{p}-1+\gamma}\int_0^1 (1-s)^{\frac{1}{2}-\frac{1}{p}}s^{\frac{2}{p}-\frac{5}{2}+\gamma}ds\\&
=t^{\frac{1}{p}-1+\gamma}c\mathcal{B}^2\eta_t \max\left\{1,\frac{4}{\theta_i^2},\ i=1,2,...,N\right\}  B\left(\frac{2}{p}-\frac{3}{2}+\gamma,\frac{3}{2}-\frac{1}{p}\right)\|f\|^2,\end{aligned}\end{equation}where $B(x,y)$ is the classical beta function. Note that $B\left(\frac{2}{p}-\frac{3}{2}+\gamma,\frac{3}{2}-\frac{1}{p}\right)$ is finite by virtue of (\ref{e51}) and (\ref{e81}). 

Similarly, by (\ref{e60}), we have 
\begin{equation}\label{e97}\begin{aligned}|\nabla F(f)(t)|_p&\leq t^{\frac{1}{p}-\frac{3}{2}+\gamma}c\mathcal{B}^2\eta_t \max\left\{1,\frac{4}{\theta_i^2},\ i=1,2,...,N\right\}  B\left(\frac{2}{p}-\frac{3}{2}+\gamma,1-\frac{1}{p}\right)\|f\|^2.\end{aligned}\end{equation}

Hence, (\ref{e62}) and (\ref{e97}) give
\begin{equation}\label{e98}\|F(f)\|\leq \mathcal{C}\eta_\infty\|f\|^2,\end{equation}where $\eta_\infty:=\sup_{t\geq0}\eta_t,$ and
$$\mathcal{C}:=c\mathcal{B}^2\max\left\{1,\frac{4}{\theta_i^2},\ i=1,2,...,N\right\}\max\left\{B\left(\frac{2}{p}-\frac{3}{2}+\gamma,\frac{3}{2}-\frac{1}{p}\right),\ B\left(\frac{2}{p}-\frac{3}{2}+\gamma,1-\frac{1}{p}\right)\right\}.$$

By (\ref{e55})-(\ref{e56}), we have
$$|e^{t\Delta}U_0|_p\leq c t^{\frac{1}{p}-1+\gamma}|U_0|_\frac{1}{1-\gamma},\ t>0$$and
$$|\nabla e^{t\Delta}U_0|_p\leq ct^{\frac{1}{p}-\frac{3}{2}+\gamma}|U_0|_\frac{1}{1-\gamma},\ t>0,$$where we recall that by (\ref{e81}) $\gamma$ was chosen such that $0<\gamma<1$. Therefore,
\begin{equation}\label{e100}\|e^{t\Delta}U_0\|\leq c|U_0|_\frac{1}{1-\gamma}.\end{equation}

We deduce that, (\ref{e98}) together with (\ref{e100}) imply that
\begin{equation}\label{e101}\|G(f)\|\leq c|U_0|_\frac{1}{1-\gamma}+\mathcal{C}\eta_\infty\|f\|^2,\ \forall f\in \mathcal{Z}_p.\end{equation}

Let    $0<\rho =\rho(\omega)$ such that
\begin{equation}\label{ro}\rho<\frac{1}{4c\mathcal{C}}\frac{1}{\eta_\infty}.\end{equation}Then set
$$\Sigma:=\left\{f\in \mathcal{Z}_p:\ \|f\|\leq R^*\right\},$$where
\begin{equation}R^*(=R^*(\omega))= 2c\rho.\end{equation}
Assuming that 
\begin{equation}\label{e103}|U_0|_\frac{1}{1-\gamma}\leq \rho,\end{equation}we  see, via (\ref{e101}), that $G(\Sigma)\subset \Sigma$.

Now, let $f,\bar{f}\in \Sigma.$ We want to estimate the difference $\|G(f)-G(\bar{f})\|$. We have,
as in \cite[Eqs. (2.32)-(2.33)]{b1} and (\ref{e57}), that
\begin{equation}\label{e104}\begin{aligned}&\|G(f)-G(\bar{f})\|\\&
=\left\|\int_0^\cdot e^{(\cdot-s)\Delta}\left(\Gamma^{-1}(s)[K(\Gamma(s)f(s)\cdot\nabla](\Gamma(s)f(s))-\Gamma^{-1}(s)[K(\Gamma(s)\bar{f}(s)\cdot\nabla](\Gamma(s)\bar{f}(s))\right)ds\right\|\\&
\leq \mathcal{C}_1\eta_\infty R^*\|f-\bar{f}\|,\end{aligned}\end{equation}for some positive $\mathcal{C}_1>0$. In addition, if $\rho$ satisfies
$$\rho<\frac{1}{2c\mathcal{C}_1}\frac{1}{\eta_\infty},$$then we see by (\ref{e104}) that $G$ is a contraction on $\Sigma$.

Hence, if $\rho>0$ is such that
\begin{equation}\label{rho}\rho<\min\left\{\frac{1}{4c\mathcal{C}}\frac{1}{\eta_\infty}; \ \frac{1}{2c\mathcal{C}_1}\frac{1}{\eta_\infty}\right\}\end{equation}and $|U_0|_\frac{1}{1-\gamma}<\rho$, then there is a unique solution $y\in \Sigma$ to (\ref{e52*}). So, $C$ in (\ref{e1100}) is a nonrandom constant such as 
$$C<\min\left\{\frac{1}{4c\mathcal{C}}; \ \frac{1}{2c\mathcal{C}_1}\right\}.$$

The proof of the last part of the Theorem \ref{t1} goes similarly as  the corresponding part of the proof of \cite[Theorem 1.1]{b1}. That is why, we only sketch it.  

By \eqref{e52*}, for all $\phi\in C_0^\infty(\mathbb{R}^2)$, we have that
$$\begin{aligned}\int_{\mathbb{R}^2}y(t,\xi)\phi(\xi)d\xi=&\int_{\mathbb{R}^2}e^{t\Delta}U_0(\xi)\phi(\xi)d\xi\\&
+\int_0^t\int_{\mathbb{R}^2}\Gamma^{-1}(s)[K(\Gamma(s)y(s))\cdot \nabla](\Gamma(s)y(s))e^{(t-s)\Delta}\phi(\xi)d\xi ds.\end{aligned}$$
Since $|e^{t\Delta}\phi|_{\tilde{p}}\leq |\phi|_{\tilde{p}},$ for all $\phi\in L^{\tilde{p}}(\mathbb{R}^2),\ 1\leq \tilde{p}<\infty,\ t\geq0$, it follows by relations  \eqref{e103}, and \eqref{e50}  \eqref{e61} and \eqref{e80},  that 
\begin{equation}\label{ho1}|\int_{\mathbb{R}^2}e^{t\Delta} U_0(\xi)\phi(\xi)d\xi|\leq \rho\ |\phi|_\frac{1}{\gamma},\end{equation}
and
\begin{equation}\label{ho2}|\int_0^t\int_{\mathbb{R}^2}\Gamma^{-1}(s)[K(\Gamma(s)y(s))\cdot \nabla](\Gamma(s)y(s))e^{(t-s)\Delta}\phi(\xi)d\xi ds| \leq \mathcal{B}^2\eta_\infty t^{\frac{2}{p}-\frac{3}{2}+2\gamma}\|y\|^2|\phi|_\frac{2p}{3p-4},\ \forall t >0.\end{equation}
Hence, via \eqref{ho1} and \eqref{ho2}, we arrive at
$$|\int_{\mathbb{R}^2}y(t,\xi)\phi(\xi)d\xi|\leq CT^{\frac{2}{p}-\frac{3}{2}+2\gamma}(|\phi|_\frac{1}{\gamma}+|\phi|_\frac{2p}{3p-4}),\ \forall \phi\in L^\frac{1}{\gamma}\cap L^\frac{2p}{3p-4},\ t\in [0,T].$$ Besides this, since $t\rightarrow e^{t\Delta}U_0$ is continuous on $L^\frac{1}{1-\gamma}$, taking into account the above, we deduce that $t\rightarrow y(t)$ is $L^\frac{1}{\gamma} \cap L^\frac{2p}{3p-4}$ weakly continuous on $[0,\infty)$.

Now, let $U_0,\ \overline{U_0}$ satisfying \eqref{e103}. Denote by $y(t, U_0),\ y(t,\overline{U_0})\in \mathcal{Z}_p$ the corresponding solutions of \eqref{e52*} with initial data $U_0$ and $\overline{U_0}$, respectively. With similar arguments as in \eqref{e57}-\eqref{e98} and \eqref{e104}, we may show that
$$\|y(\cdot,U_0)-y(\cdot,\overline{U_0})\|\leq C|U_0-\overline{U_0}|_\frac{1}{1-\gamma}+\eta_\infty C_1 R^*\|y(\cdot,U_0)-y(\cdot,\overline{U_0})\|.$$Since $R^*C_1\eta_\infty$ was chosen to be strictly less than 1, we conclude that
$$\|y(\cdot,U_0)-y(\cdot,\overline{U_0})\|\leq \frac{C}{1-R^*C_1\eta_\infty}|U_0-\overline{U_0}|_\frac{1}{1-\gamma}.$$With other words, the map $U_0\rightarrow y(\cdot, U_0)$ is Lipschitz from $L^\frac{1}{1-\gamma}$ to $\mathcal{Z}_p$. \hfill$\Box$

\subsection{Global in time behavior of the solution}

Let us recall that, in virtue of \eqref{e52*}, we have that the solution $y$ to \eqref{e26*} satisfies
$$y(t)=e^{t\Delta}U_0+\int_0^te^{(t-s)\Delta}M(y(s))ds,$$where $M(y(s)):=\Gamma^{-1}(s)(K(\Gamma(s)y(s))\cdot \nabla)\Gamma(s)y(s)).$
It follows that
\begin{equation}\label{ho30}|y(t)|_\frac{1}{1-\gamma}\leq C|e^{t\Delta}U_0|_\frac{1}{1-\gamma}+\int_0^t|e^{(t-s)\Delta}M(y(s))|_\frac{1}{1-\gamma}ds,\end{equation}where we use \eqref{e55}, to obtain
\begin{equation}\label{ho31}|y(t)|_\frac{1}{1-\gamma}\leq C\left[|U_0|_\frac{1}{1-\gamma}+\int_0^t(t-s)^{1-\gamma-\frac{1}{\alpha}}|M(y(s))|_\alpha ds\right].\end{equation}

We take $\alpha=\frac{2p}{4-p}$,  and use relation \eqref{e50} and similar ideas as in \eqref{e57}, to deduce that
$$\int_0^t(t-s)^{1-\gamma-\frac{1}{\alpha}}|M(y(s))|_\alpha ds\leq C\eta_\infty\int_0^t (t-s)^{1-\gamma-\frac{4-p}{2p}}s^{-\gamma}|y(s)|_p|\nabla y(s)|_pds,$$where involving \eqref{e61}, it yields
\begin{equation}\label{ho33}\begin{aligned}\int_0^t(t-s)^{1-\gamma-\frac{1}{\alpha}}|M(y(s))|_\alpha ds &\leq  C\eta_\infty\int_0^t (t-s)^{1-\gamma-\frac{4-p}{2p}}\cdot s^{\frac{2}{p}-\frac{5}{2}+2\gamma-\gamma}ds \ \|y\|^2\\&
=C\eta_\infty B\left(\frac{2}{p}-\frac{5}{2}+\gamma+1,1-\gamma-\frac{4-p}{2p}+1\right)\|y\|^2,\ \forall t\geq0. \end{aligned}\end{equation}
By the choice  of $p$ and $\gamma$ in Theorem \ref{t1}, we see that the beta function $B\left(\frac{2}{p}-\frac{5}{2}+\gamma+1,1-\gamma-\frac{4-p}{2p}+1\right)$ is finite.

Hence, \eqref{ho31} and \eqref{ho33} imply that
\begin{equation}\label{ho34} |y(t)|_\frac{1}{1-\gamma}\leq C\left[\rho+\rho^2\right],\ \forall t\geq0,\end{equation}where $\rho$ is introduced by \eqref{rho}.

\subsection{A random version of the $2-D$ Navier-Stokes equation and existence of its solution, for small initial data}
 We keep on following the ideas in \cite[Section 3]{b1}. We fix in (\ref{e1}) the initial random variable $x$ by the formula
$$x=K(U_0),$$ where $U_0$ satisfies condition (\ref{e110}) for all $\omega\in \Omega_0$. Then we define the process $X$ by the formula
$$X(t)=K(U(t))=K(\Gamma(t)y(t)),\ t\geq0,$$where $y$ is the solution to (\ref{e52*}) who's existence and uniqueness is guaranteed by Theorem \ref{t1}. Since $U\in \mathcal{Z}_p$ (defined in Theorem \ref{t1}), recalling the arguments in (\ref{e110}) (i.e., via the Riesz potentials estimates we showed that $|K(f)|_r\leq |f|_\frac{2r}{2+r}$), we deduce that
$$|X(t)|_\frac{2p}{2-p}\leq C |U(t)|_p,\ t\geq0,$$and so
\begin{equation}\label{e111}t^{1-\frac{1}{p}-\gamma}X\in C_b([0,\infty);L^\frac{2p}{2-p}).\end{equation}

Furthermore, by the Carlderon-Zygmund inequality (see \cite[Theorem 1]{z}), we know that
$$|\nabla K(f)|_p\leq C| f|_p,\ \forall f\in L^p.$$ Thus taking once $f=U(t)$ then $f=\partial_j U(t)$ in the above inequality, and making use of the fact that $\Gamma^{-1} U\in \mathcal{Z}_p$, we get that
\begin{equation}\label{e112}t^{1-\frac{1}{p}-\gamma}\partial_i X\in C_b([0,\infty);L^p),\end{equation}and
\begin{equation}\label{e113}t^{\frac{3}{2}-\frac{1}{p}-\gamma}\partial_i\partial_j X\in C_b([0,\infty);L^p),\end{equation}for $i,j=1,2.$

By (\ref{e1100}) and (\ref{e103}) and the Fernique theorem, we see that both $|U_0|_\frac{1}{1-\gamma}$ and $R^*$ belong to $\cap_{r\geq 1}L^r(\Omega).$ Thus, since $y\in \Sigma$, we get via (\ref{e111})-(\ref{e113}) that
$$\begin{aligned}& t^{1-\frac{1}{p}-\gamma}X\in C_b([0,\infty);\ L^r(\Omega;L^\frac{2p}{2-p})),\\&
t^{1-\frac{1}{p}-\gamma}\partial_iX\in C_b([0,\infty);\ L^r(\Omega;L^p)),\\&
t^{\frac{3}{2}-\frac{1}{p}-\gamma}\partial_i\partial_jX\in C_b([0,\infty);\ L^r(\Omega;L^p)),\ \forall r\geq1, \ i,j=1,2.\end{aligned}$$

Finally, if in equation (\ref{e52*}) one applies the operator $K(\Gamma\cdot)$, we get for $X$ the equation
$$\begin{aligned}X(t)&=K(e^{t\Delta} \Gamma(t)\ curlx)+\int_0^t K\left(e^{(t-s)\Delta}\Gamma(t)\Gamma^{-1}(s)[K(\ curl X(s))\cdot \nabla](\ curl X(s))\right)ds,\ t\geq0.\end{aligned}$$
The above equation may be viewed as the random version of the Navier-Stokes equation (\ref{e1}). However, since $U_0$ is not $\mathcal{F}_0-$measurable, the process $t\rightarrow U(t)$ is not $\mathcal{F}_t-$adapted, and so $X$ is not $\mathcal{F}_t-$adapted, too. By Theorem \ref{t1} we know that the above equation has a unique solution.

\section{The existence results for the $3-D$ case}
Now, we place ourselves in the whole $\mathbb{R}^3$ space. In this case, the Biot-Savart integral operator is given as
$$K(u)(\xi):=-\frac{1}{4\pi}\int_{\mathbb{R}^3}\frac{\xi-\overline{\xi}}{|\xi-\overline{\xi}|^3}\times u(\overline{\xi})d\overline{\xi},\ \xi\in\mathbb{R}^3. $$ Hence, by (\ref{e701}), we get the following equation for the vorticity field $U$:
\begin{equation}\label{j1}\left\{\begin{array}{l}\displaystyle dU=\Delta Udt+[K(U)\cdot \nabla]Udt-(U\cdot \nabla)K(U)dt+\sum_{i=1}^N(B_i(t)+\theta_iI)U d\beta_i \text { in }(0,\infty)\times\mathbb{R}^3,\\
U(0,\xi)=U_0(\xi)=(curl\ x)(\xi),\ \xi\in \mathbb{R}^3.\end{array}\right.\ \end{equation}
Consider again the transformation 
\begin{equation}\label{j4} U(t):=e^{\sum_{i=1}^N\left[\int_0^tB_i(s)d\beta_i+\beta_i\theta_i-\frac{1}{2}\int_0^tB_i^2(s)ds-\frac{1}{2}\theta_i^2t-\theta_i\int_0^tB_i(s)ds\right]}y(t).\end{equation} By Lemmas \ref{l12}, \ref{l13} and Corollary \ref{cor}, we have that the operator 
\begin{equation}\label{j5}\Gamma(t):=e^{\sum_{i=1}^N\left[\int_0^tB_i(s)d\beta_i+\beta_i\theta_i-\frac{1}{2}\int_0^tB_i^2(s)ds-\frac{1}{2}\theta_i^2t-\theta_i\int_0^tB_i(s)ds\right]}\end{equation} is well-defined on $L^q(\mathbb{R}^3)$, $1<q<\infty$, and it is invertible. Again, set $\Gamma^{-1}$ for its inverse.

Then, likewise in \cite{l2}, one may show that $y$ satisfies 
\begin{equation}\label{j6}\begin{aligned}\frac{d y}{dt}=&\Delta y(t)+\Gamma^{-1}(t)[K(\Gamma(t)y(t))\cdot \nabla](\Gamma(t)y(t))-\Gamma^{-1}(t)(\Gamma y(t)\cdot\nabla)(K(\Gamma y(t));\ y(0)=U_0.\end{aligned}\end{equation}
We write equation (\ref{j6}) in the mild formulation as
\begin{equation}\label{j7}y(t)=G(y(t)):=e^{t\Delta}U_0+F(y)(t),\ t\geq0,\end{equation}where
\begin{equation}\label{j8}\begin{aligned}F(f)(t):=\int_0^te^{(t-s)\Delta}\Gamma^{-1}(s)[K(\Gamma(s)f(s))\cdot \nabla](\Gamma(s)f(s))ds-\int_0^t e^{(t-s)\Delta}\Gamma^{-1}(s)(\Gamma y(s)\cdot\nabla)(K(\Gamma y(s))ds,\ t\geq0.\end{aligned}\end{equation}Here,
$$(e^{t\Delta}g)(\xi):=\frac{1}{(4\pi t)^\frac{3}{2}}\int_{\mathbb{R}^3}e^{-\frac{|\xi-\overline{\xi}|^2}{4t}}g(\overline{\xi})d\overline{\xi},\ t\geq0,\ \xi\in\mathbb{R}^3.$$
One can easily show that for $1<\alpha\leq \beta<\infty$, we have, for some $c>0$, the estimates

\begin{equation}\label{j9} |e^{t\Delta}g|_\beta\leq ct^{\frac{3}{2}\left(\frac{1}{\beta}-\frac{1}{\alpha}\right)}|g|_\alpha,\ g\in L^\alpha(\mathbb{R}^3),\end{equation}and
\begin{equation}\label{j10}|\partial_je^{t\Delta}g|_\beta\leq c t^{\frac{3}{2}\left(\frac{1}{\beta}-\frac{1}{\alpha}\right)-\frac{1}{2}}|g|_\alpha,\ u\in L^\alpha(\mathbb{R}^3),\ j=1,2,3.\end{equation}

The following theorem is the counterpart, for the $3-D$ case, of the Theorem \ref{t1}.
\begin{theorem}\label{t2}Let $\frac{3}{2}<p<2$, and $\Omega_0:=\left\{\eta_\infty<\infty\right\}$ and consider  (\ref{j7}) for fixed $\omega\in \Omega_0$. Then,   $\mathbb{P}(\Omega_0)=1$ and there is a positive constant $C$ independent of $\omega\in\Omega_0$ such that, if $U_0\in L^\frac{3}{2}(\mathbb{R}^3)$ is such as
\begin{equation}\label{j13}\eta_\infty |U_0|_\frac{3}{2}\leq C,\end{equation}then the random equation (\ref{j7}) has a unique solution $y\in \mathcal{Z}_p$ which satisfies
$$[K(\Gamma y)\cdot \nabla](\Gamma y)-(\Gamma y\cdot\nabla)(K(\Gamma y))\in L^1(0,\infty; L^\frac{3p}{6-p}(\mathbb{R}^3)).$$ Here
$$\eta_\infty:=  e^{\sup_{0\leq s<\infty}\sum_{i=1}^N[\beta_i(s)\theta_i-\frac{s}{4}\theta_i^2]}$$ 
 and $\mathcal{Z}_p$ is defined by
\begin{equation}\label{j15}\mathcal{Z}_p:=\left\{f:\ t^{1-\frac{3}{2p}}f\in C_b([0,\infty);L^p(\mathbb{R}^3)),\ t^{\frac{3}{2}\left(1-\frac{1}{p}\right)}\partial_j f\in C_b([0,\infty);L^p(\mathbb{R}^3)),\ j=1,2,3\right\}.\end{equation}

Moreover, for each $\phi \in L^3(\mathbb{R}^3)\cap L^\frac{3p}{4p-6}(\mathbb{R}^3)$, the function 
$$t\rightarrow \int_{\mathbb{R}^3}y(t,\xi)\phi(\xi)d\xi$$ is continuous on $[0,\infty)$. The map $U_0\rightarrow y$ is Lipschitz from $L^\frac{3}{2}(\mathbb{R}^3)$ to $\mathcal{Z}_p$ .

In particular, the vorticity equation (\ref{j1}) has a unique solution $U$ such that $\Gamma^{-1} U\in\mathcal{
Z}_p.$
\end{theorem}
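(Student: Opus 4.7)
The plan is to mirror the fixed-point argument used for Theorem \ref{t1}, adapted to the new pair of nonlinear terms appearing in \eqref{j6}. The overall scheme is: (i) derive an $L^q$-estimate of both convective terms in terms of $|f|_p|\nabla f|_p$ (the analogue of Lemma \ref{l2}), (ii) use the heat-semigroup estimates \eqref{j9}--\eqref{j10} to bound $\|F(f)\|$ in $\mathcal{Z}_p$ by $\mathcal{C}\eta_\infty\|f\|^2$, (iii) apply Banach's fixed-point theorem on a small closed ball $\Sigma\subset\mathcal{Z}_p$, and (iv) transfer the conclusions to the weak$-*$ continuity, Lipschitz dependence, and the vorticity equation \eqref{j1} via $U=\Gamma y$.

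For step (i), set $q=\tfrac{3p}{6-p}$ (note $q>1$ for $\tfrac{3}{2}<p<2$) and $r=\tfrac{3p}{3-p}$, so that $\tfrac{1}{q}=\tfrac{1}{r}+\tfrac{1}{p}$. For the first term, the Riesz-potential estimate gives $|K(\Gamma f)|_r\leq C|\Gamma f|_p$, so that
\[
|\Gamma^{-1}[K(\Gamma f)\cdot\nabla](\Gamma f)|_q\leq C\mathcal{B}^2e^{\sum_i[\beta_i(t)\theta_i-\tfrac{t}{2}\theta_i^2]}|f|_p|\nabla f|_p
\]
by Hölder and Lemmas on $\Gamma,\Gamma^{-1}$. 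For the second term $\Gamma^{-1}(\Gamma f\cdot\nabla)K(\Gamma f)$ I would instead pair $|\nabla K(\Gamma f)|_p\leq C|\Gamma f|_p$ (Calderon--Zygmund) with $|\Gamma f|_r\leq C|\nabla\Gamma f|_p$ (Sobolev embedding $W^{1,p}(\mathbb{R}^3)\hookrightarrow L^{r}(\mathbb{R}^3)$, valid for $p<3$). This yields the same bound, and this Sobolev step is the main structural difference from the $2$-D proof.

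For step (ii), applying \eqref{j9}--\eqref{j10} with $\alpha=q$, $\beta=p$, and using the embedding bound $|f(s)|_p|\nabla f(s)|_p\leq s^{\tfrac{3}{p}-\tfrac{5}{2}}\|f\|^2$ for $f\in\mathcal{Z}_p$, one obtains
\[
|F(f)(t)|_p\leq C\eta_\infty\|f\|^2\,t^{\tfrac{3}{2p}-1}\,B\!\Bigl(\tfrac{3}{p}-\tfrac{3}{2},\,\tfrac{3}{2}-\tfrac{3}{2p}\Bigr),\qquad
|\nabla F(f)(t)|_p\leq C\eta_\infty\|f\|^2\,t^{\tfrac{3}{2p}-\tfrac{3}{2}}\,B\!\Bigl(\tfrac{3}{p}-\tfrac{3}{2},\,1-\tfrac{3}{2p}\Bigr),
\]
where the finiteness of both beta functions is precisely the combined content of $\tfrac{3}{2}<p<2$. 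Hence $\|F(f)\|\leq\mathcal{C}\eta_\infty\|f\|^2$. A direct application of \eqref{j9}--\eqref{j10} to $e^{t\Delta}U_0$ with $\alpha=\tfrac{3}{2}$, $\beta=p$ gives $\|e^{t\Delta}U_0\|\leq c|U_0|_{3/2}$, so that $\|G(f)\|\leq c|U_0|_{3/2}+\mathcal{C}\eta_\infty\|f\|^2$.

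For step (iii), I choose $\rho=\rho(\omega)$ small enough (satisfying the analogue of \eqref{rho} with $\mathcal{C}$ and the Lipschitz constant $\mathcal{C}_1$ in front of $\eta_\infty$) and $R^*=2c\rho$, so that $G$ maps $\Sigma=\{f\in\mathcal{Z}_p:\|f\|\leq R^*\}$ into itself and is a contraction; this determines the universal constant $C$ in \eqref{j13}. For the weak$-*$ continuity, test \eqref{j7} against $\phi\in L^3(\mathbb{R}^3)\cap L^{\tfrac{3p}{4p-6}}(\mathbb{R}^3)$: the linear part $\langle e^{t\Delta}U_0,\phi\rangle$ is continuous because $U_0\in L^{3/2}$ and $L^3$ is the dual exponent; for the nonlinear part, the $L^q$-estimate from step (i) pairs with $|\phi|_{q'}=|\phi|_{\tfrac{3p}{4p-6}}$ via the contraction $|e^{t\Delta}\phi|_{q'}\leq|\phi|_{q'}$, giving a continuous $t$-dependence on $[0,\infty)$. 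The Lipschitz property follows from applying the same bilinear estimates to $y(\cdot,U_0)-y(\cdot,\overline{U_0})$ and using $R^*\mathcal{C}_1\eta_\infty<1$. Finally, $U:=\Gamma y$ solves \eqref{j1} and is the unique such solution with $\Gamma^{-1}U\in\mathcal{Z}_p$. The main obstacle I anticipate is the second nonlinearity: unlike in $2$-D, the Biot--Savart kernel is not available on it, and one must rely on the Sobolev embedding $W^{1,p}\hookrightarrow L^{\tfrac{3p}{3-p}}$, which ties the critical lower bound $p>\tfrac{3}{2}$ to the integrability of the $(t-s)^{-\tfrac{3}{2p}}$ singularity in $|\nabla F|_p$.
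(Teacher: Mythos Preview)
Your proposal is correct and follows essentially the same route as the paper's proof, which simply says ``identical arguments as in Theorem \ref{t1}, but with the Sobolev embeddings, Riesz potential estimates, and Calderon--Zygmund inequality corresponding to the $3$-D case'' and defers to \cite{b1}. In particular, your handling of the second nonlinearity via Calderon--Zygmund for $|\nabla K(\Gamma f)|_p$ together with the homogeneous Sobolev embedding $|\Gamma f|_{3p/(3-p)}\leq C|\nabla(\Gamma f)|_p$ is exactly the ingredient the paper points to, and your identification of the beta-function constraints as the origin of the window $\tfrac{3}{2}<p<2$ is accurate.
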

We notice that, likewise in \cite[Remark 1.2]{b1} one can show that the condition (\ref{j13}) is not void. 
\begin{proof}The proof follows by identical arguments as those in the proof of Theorem \ref{t1}, but, this time, with the Sobolev embeddings, Riesz potential estimates, and Calderon-Zygmund inequality corresponding to the $3-D$ case, that can be found in the proof of Theorem 1 in \cite{b1}. Therefore, no further details are given.
\end{proof}
\subsection{A random version of the $3-D$ Navier-Stokes equation}\label{33}
We go one  obtaining the counterparts of the results in the $2-D$ case, for the $3-D$ case as-well.
Concerning the random version of the $3-D$ Navier-Stokes equation, we fix in (\ref{e1}) the initial random variable $x$ by the formula
$$x=K(U_0),$$ where $U_0$ satisfies condition (\ref{j13}) for all $\Omega_0$. Then we define the process $X$ by the formula
$$X(t)=K(U(t))=K(\Gamma(t)y(t)),\ t\geq0,$$where $y$ is the solution to (\ref{j7}) who's existence and uniqueness is guaranteed by Theorem \ref{t2}. Since $U\in \mathcal{Z}_p$ (defined in Theorem \ref{t2}),  via the Riesz potentials estimates we may show that $|X(t)|_\frac{3p}{3-p}\leq |U(t)|_p,\ t\geq0,$ (see \cite[Eq. (3.3)]{b1}), and so
\begin{equation}\label{j30}t^{1-\frac{3}{2p}}X\in C_b([0,\infty);L^\frac{3p}{3-p}).\end{equation}

Furthermore, by the Carlderon-Zygmund inequality (see \cite[Theorem 1]{z}), we know that
$$|\nabla K(f)|_p\leq C| f|_p,\ \forall f\in L^p.$$ Thus taking once $f=U(t)$ then $f=\partial_j U(t)$ in the above inequality, and making use of the fact that $\Gamma^{-1} U\in \mathcal{Z}_p$, we get that
\begin{equation}\label{j20}t^{\frac{3}{2}\left(1-\frac{1}{p}\right)}\partial_i X\in C_b([0,\infty);L^p),\end{equation}and
\begin{equation}\label{j21}t^{\frac{3}{2}\left(1-\frac{1}{p}\right)}\partial_i\partial_j X\in C_b([0,\infty);L^p),\end{equation}for $i,j=1,2,3.$

Then, similarly as in \cite[Eqs. (3.4)-(3.10)]{b1}, one may deduce as-well that

$$\begin{aligned}& t^{1-\frac{3}{2p}}X\in C_b([0,\infty);\ L^r(\Omega;L^\frac{3p}{3-p})),\ r\geq1,\\&
t^{\frac{3}{2}\left(1-\frac{1}{p}\right)}\partial_iX\in C_b([0,\infty);\ L^r(\Omega;L^p)),\\&
t^{\frac{3}{2}\left(1-\frac{1}{p}\right)}\partial_i\partial_jX\in C_b([0,\infty);\ L^r(\Omega;L^p)),\ \forall r\geq1, \ i,j=1,2,3.\end{aligned}$$

Finally, if in equation (\ref{j1}) one applies the operator $K$, we get for $X$ the equation
$$\begin{aligned}X(t)&=K(e^{t\Delta} \Gamma(t)\ curlx)+\int_0^t K\left(e^{(t-s)\Delta}\Gamma(t)\Gamma^{-1}(s)[K(\ curl X(s))\cdot \nabla](\ curl X(s))\right)ds\\&
-\int_0^t K\left(e^{(t-s)\Delta}\Gamma(t)(\Gamma^{-1}(s)(\ curl X(s))\cdot \nabla)(\ K(curl X(s)))\right)ds.\end{aligned}$$
The above equation may be viewed as the random version of the Navier-Stokes equation (\ref{e1}), the $3-D$ case. However, since $U_0$ is not $\mathcal{F}_0-$measurable, the process $t\rightarrow U(t)$ is not $\mathcal{F}_t-$adapted, and so $X$ is not $\mathcal{F}_t-$adapted, too. By Theorem \ref{t2} we know that the above equation has a unique solution.

\section{Existence of  solutions  to a stochastic Navier-Stokes equations, up to a stopping time, adapted to the Brownian motion}
Recall the notations and the results from Theorem \ref{t1}. For each $r>0$ we define the stopping time
$$\tau_r:=\inf\left\{t\geq0;\ \eta_t\geq r\right\}.$$ Then, $\tau_r$ goes to infinity for $r\rightarrow\infty$. If $U_0(\omega)\in L^\frac{1}{1-\gamma}$ is such that
$$\sup_{0\leq s\leq \tau_r(\omega)}\eta_s(\omega)|U_0(\omega)|_\frac{1}{1-\gamma}<C,$$then equation  (\ref{j7}) has a unique solution $y=y(t,\omega),\ t\in [0,\tau_r(\omega)],\ y(\omega,0)=U_0(\omega).$   Once we fix $r>0$, noticing that $\eta_s\leq r$  if $s\leq \tau_r$, we deduce that in the case that $U_0\in L^\frac{1}{1-\gamma}$ is deterministic and $|U_0|_\frac{1}{1-\gamma}\leq \frac{1}{r}C,$ we have
$$\sup_{0\leq s\leq \tau_r(\omega)}\eta_s(\omega)|U_0|_\frac{1}{1-\gamma}\leq C,\ \mathbb{P}-\text{a.e.} \ \omega \in \Omega.$$Now, define
$$y^{\tau_r}(t):=\left\{\begin{array}{l}y(t),\ t\in[0,\tau_r],\\
y(\tau_r),\ t\geq\tau_r.\end{array}\right.\ $$ Since $U_0$ is deterministic, it follows that $y^{\tau_r}$ is $(\mathcal{F}_t)_{t\geq0}-$adapted and so is $U^{\tau_r}(t)=\Gamma(t)y^{\tau_r}(t),\ t\geq0.$ By the stochastic calculus, we conclude that $U^{\tau_r}$ solves the stochastic vorticity equation  \eqref{e2000} on $[0,\tau_r]$, while $X^{\tau_r}(t)=K(U^{\tau_r}(t))$ solves the stochastic Navier-Stokes equation on $[0,\tau_r]$. These lead to the following corollary of Theorem \ref{t1}
\begin{corollary}For each $r$ and deterministic $U_0\in L^\frac{1}{1-\gamma}$ satisfying the condition
$$|U_0|_\frac{1}{1-\gamma}\leq \frac{C}{r},$$there is a unique solution $U=U(t,\omega)$ to the vorticity equation (\ref{e2000}) up to an explosion time $\tau_r$ adapted to the Brownian motion.\end{corollary}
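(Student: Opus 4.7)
The plan is to leverage the path-by-path existence supplied by Theorem \ref{t1} on the interval where the weight $\eta_t$ is controlled, then stop to produce an $(\mathcal{F}_t)$-adapted solution of the original SPDE via Itô calculus. First I would check that $\tau_r\uparrow\infty$ as $r\to\infty$: by the strong law of large numbers the drift $-\frac{s}{4}\theta_i^2$ dominates $\beta_i(s)\theta_i$ on large time scales, so $\eta_\infty<\infty$ a.s., which gives $\mathbb{P}(\Omega_0)=1$ and forces $\tau_r\to\infty$ almost surely. Next, on $\{s\le\tau_r\}$ one has $\eta_s\le r$ by definition of $\tau_r$, and since $U_0$ is deterministic with $|U_0|_{\frac{1}{1-\gamma}}\le C/r$, the Theorem \ref{t1} smallness condition \eqref{e1100} holds on $[0,\tau_r(\omega)]$ for every $\omega\in\Omega_0$. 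Thus Theorem \ref{t1} (applied on the random time interval $[0,\tau_r(\omega)]$, or alternatively on the whole real line with a truncated noise for which the corresponding $\eta_\infty$ equals $\eta_{\tau_r}$) yields a unique path-by-path solution $y(\cdot,\omega)\in\mathcal{Z}_p$ of \eqref{e52*} on $[0,\tau_r(\omega)]$.

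Second, I would set $y^{\tau_r}(t):=y(t\wedge\tau_r)$ and $U^{\tau_r}(t):=\Gamma(t)y^{\tau_r}(t)$. Adaptedness of $U^{\tau_r}$ is the key measurability issue: the fixed-point iteration $y_{n+1}=G(y_n)$, $y_0\equiv U_0$, from Section \ref{s3} involves only $\Gamma(s)$ and $\Gamma^{-1}(s)$ for $s\le t$, the Brownian increments up to time $t$, and the deterministic $U_0$, so each $y_n(t)$ is $\mathcal{F}_t$-measurable; passing to the limit in $\mathcal{Z}_p$ preserves this property. Since $\tau_r$ is an $(\mathcal{F}_t)$-stopping time, $y^{\tau_r}$ is $(\mathcal{F}_t)$-adapted, and hence so is $U^{\tau_r}$.

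Third, to recover the stochastic vorticity equation \eqref{e2000} on $[0,\tau_r]$, I would apply Itô's product formula to $U^{\tau_r}=\Gamma y^{\tau_r}$, using the SDE \eqref{edi1} for $\Gamma$ and the random PDE \eqref{e26*} for $y$. The drift contributions combine to give $\Delta(\Gamma y)+(K(\Gamma y)\cdot\nabla)(\Gamma y)$, because $\Gamma$ commutes with $\Delta$ (by Lemma \ref{l12} and Corollary \ref{cor}) and the conjugation by $\Gamma$ cancels out in the nonlinear term; the stochastic integral contributions produce $\sum_i(B_i(t)+\theta_i I)(\Gamma y)\,d\beta_i$, as desired. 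The quadratic variation correction in the Itô formula is exactly compensated by the Stratonovich-type correctors $-\frac12\int_0^tB_i^2(s)\,ds-\frac12\theta_i^2 t-\theta_i\int_0^tB_i(s)\,ds$ that we built into the definition \eqref{e27*} of $\Gamma$. Finally, $X^{\tau_r}=K(U^{\tau_r})$ then solves the stochastic Navier-Stokes equation \eqref{e1} on $[0,\tau_r]$ by the usual Biot-Savart argument, and uniqueness is inherited from the path-by-path uniqueness in Theorem \ref{t1}.

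The main obstacle is the Itô computation of step three: although $\Gamma$ is a family of operators built from operator exponentials of $B_i$ with time-dependent stochastic coefficients, the key simplification is that the $B_i$ commute with each other and with $\Delta$, so the usual scalar Itô product rule applies term-by-term once the correctors in the exponent of $\Gamma$ have been identified. The adaptedness step, though routine, does rely crucially on $U_0$ being deterministic, since otherwise the pathwise fixed point would not be $\mathcal{F}_t$-measurable at $t=0$.
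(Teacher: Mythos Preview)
Your proposal is correct and follows essentially the same route as the paper: define the stopping time $\tau_r$, use $\eta_s\le r$ on $[0,\tau_r]$ together with $|U_0|_{\frac{1}{1-\gamma}}\le C/r$ to invoke Theorem~\ref{t1} path-by-path, stop the solution, and then apply It\^o's formula to $U^{\tau_r}=\Gamma y^{\tau_r}$ to recover \eqref{e2000}. You supply more detail than the paper does (the SLLN argument for $\tau_r\uparrow\infty$, the adaptedness via the fixed-point iteration, and the role of the correctors in the It\^o step), but the strategy is identical.
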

A similar corollary holds true for the 3-D case  in Theorem \ref{t2}. As mentioned in \cite[Remark 4.2]{b1} this local existence and uniqueness result for the stochastic Navier-Stokes equation is new due to the fact of low regular initial condition requirements. Moreover, one can solve the Navier-Stokes equation in vorticity form on a nonempty time interval $[0,\tau_r)$ for deterministic $U_0\in L^\frac{1}{1-\gamma}$ ( $U_0\in L^\frac{3}{2}$ in the 3-D case).

\section*{Acknowledgement} I.M. was supported by a grant of the "Alexandru Ioan Cuza" University of Iasi, within
the Research Grants program, Grant UAIC, code GI-UAIC-2018-03. Financial support by the DFG through CRC 1283 is gratefully 
acknowledged by M.R.

\end{document}